\newtheorem{lemma}{Lemma}
\newtheorem{prop}{Proposition}
\theoremstyle{definition}
\newtheorem{remark}{Remark}
\newtheorem{definition}{Definition}
\newcolumntype{d}[1]{D{.}{.}{#1}}
\def\index#1{}
\begin{document}

\begin{frontmatter}
\pretitle{Research Article}

\title{Estimation of the drift parameter for the fractional stochastic heat equation via power variation}

\author{\inits{Z.}\fnms{Zeina}~\snm{Mahdi Khalil}\ead[label=e1]{zeina\_kh@outlook.fr}}
\author{\inits{C.}\fnms{Ciprian}~\snm{Tudor}\thanksref{cor1}\ead[label=e2]{ciprian.tudor@univ-lille.fr}}
\thankstext[type=corresp,id=cor1]{Corresponding author.}
\address{Laboratoire Paul Painlev\'{e},
\institution{Universit\'{e} de Lille}, CNRS, UMR 8524,
F-59655~Villeneuve d'Ascq, \cny{France}}


\markboth{Z. Mahdi Khalil, C. Tudor}{Estimation of the drift parameter for the fractional stochastic heat equation via power variation}

\begin{abstract}
We define power variation estimators for the drift parameter of the
stochastic heat equation with the fractional Laplacian and 
an additive
Gaussian noise which is white in time and white or correlated in space.
We prove that these estimators are consistent and asymptotically normal
and we derive their rate of convergence under the Wasserstein metric.
\end{abstract}
\begin{keywords}
\kwd{Stochastic heat equation}
\kwd{fractional Brownian motion}
\kwd{fractional Laplacian}
\kwd{$q$~variation}
\kwd{drift parameter estimation}
\end{keywords}
\begin{keywords}[MSC2010]%
\kwd{60G15}
\kwd{60H05}
\kwd{60G18}
\end{keywords}

\received{\sday{8} \smonth{4} \syear{2019}}
\revised{\sday{22} \smonth{7} \syear{2019}}
\accepted{\sday{11} \smonth{9} \syear{2019}}
\publishedonline{\sday{3} \smonth{10} \syear{2019}}

\end{frontmatter}

\section{Introduction}%
\label{sec1}
The purpose of this work is to estimate the drift parameter
$\theta >0$ of the fractional stochastic heat equation\index{fractional ! stochastic heat equation}\index{stochastic heat equation fractional}
%
\begin{equation}
\label{intro-1}
\frac{\partial u _{\theta } }{\partial t} (t,x)= -\theta (-\Delta )
^{\frac{\alpha }{2}}u_{\theta } (t,x)+ \dot{W} (t,x),\quad
 t\geq 0, x\in \mathbb{R},
\end{equation}
with vanishing initial conditions, where $(-\Delta ) ^{
\frac{\alpha }{2}}$ denotes the fractional Laplacian\index{fractional ! Laplacian} of order
$\alpha \in (1, 2]$, $\theta >0$ and $W$ is a Gaussian noise\index{Gaussian noise} which is
white in time and white or correlated in space.

The parameter estimation for stochastic partial differential equations
(SPDEs in the sequel) constitutes a research direction of wide interest
in probability theory and mathematical statistics. We refer, among many
others, to the recents surveys \cite{Lot} and \cite{Cia}.
On the other side, there are relatively few works that consider the
solution to a SPDE observed at discrete points in time and/or in space.
Among the first works in this direction, we refer to \cite{Mo} and
\cite{Ma} for the maximum likelihood and least square estimators
for parabolic, respectively elliptic type SPDEs driven by a space-time
white noise. The study in \cite{Ma} has been then extended in
\cite{BT1}, by adding a time-varying volatility in the noise term and
by using power variation techniques to estimate the parameter of the
model. Other recent works on parameter estimates for discretely sampled
SPDEs via power variations\index{power variations} are \cite{CH,Chong,BT2,PoTr} and \cite{ZZ}.

In this paper, we extend the above results into two directions. Firstly,
we replace the standard Laplacian operator used in all the above
references by a fractional Laplacian.\index{fractional ! Laplacian} On the other hand, we consider a
simpler form, comparing to \cite{BT1,Ma}, of the
differential operator. Secondly, we also consider a noise term which is
correlated in space. Our purpose is to propose power variation type
estimators for the drift parameter in the stochastic model
(\ref{intro-1}), based on discrete observations of the solution in time
or in space, and to analyze the consistency and the limit distribution
of the estimators by taking advantage of the link between the solution
and the fractional Brownian motion.\index{fractional ! Brownian motion} Our approach to construct and
analyze the estimators for the drift parameter is based on the
asymptotic behavior of the $q$-variations of the mild solution\index{mild solution} to
(\ref{intro-1}). It is well known (see, e.g., \cite{FKM,MahTu,T}) that there exists a strong link between
the law of this mild solution\index{mild solution} with $\theta =1$ and the fractional
Brownian motion\index{fractional ! Brownian motion} and related processes. We will use this connection in
order to deduce the behavior of the $q$-variations (of suitable order
$q$) of the solutions to (\ref{intro-1}) and to prove the consistency,
asymptotic normality and Berry--Ess\'{e}en bounds under the Wasserstein
distance\index{Wasserstein distance} for the associated estimators. For the situation when $W$ is
a space-time white noise, we will obtain two estimators for the drift
parameter: one based on the temporal variations and one based of the
spatial variations of the mild solution\index{mild solution} $u_{\theta }$. Similarly, two
estimators are defined when the Gaussian noise\index{Gaussian noise} $W$ is white in time and
colored in space (with the spatial covariance given by the Riesz
kernel\index{Riesz kernel}). Even if the order of the variations which appear in the
definition of the estimator is different in the four cases (this order
may depend on the parameter $\alpha $ of the fractional Laplacian\index{fractional ! Laplacian} and/or
on the spatial correlation), all the estimators are asymptotically
normal, they have the same rate of convergence of order $n ^{-
\frac{1}{2}}$ and they have the same distance to the Gaussian
distribution. The case of the standard Laplacian\index{standard Laplacian} (i.e., $\alpha =2$) has
been studied in \cite{PoTr}.

We organize the paper as follows. In Section~\ref{sec2} we present general facts
on the stochastic heat equation\index{stochastic heat equation} with the fractional Laplacian\index{fractional ! Laplacian} and the
behavior of the variations of the perturbed fractional Brownian motion.\index{fractional ! Brownian motion}
In Section~\ref{sec3} we discuss the drift parameter estimation for the
fractional heat equation\index{heat equation} with a space-time white noise while in Section~\ref{sec4} we treat the case when the noise is correlated in space.

We will denote by $c$, $C$ a generic positive constant that may change
from line to line (or even inside of the the same line). By
$\to ^{(d)} $ we denote the convergence in distribution while
$\equiv ^{(d)} $ stands for the equivalence of finite dimensional
distributions.\index{finite dimensional distributions}

\section{The fractional heat equation\index{heat equation} driven by a space-time white noise}%
\label{sec2}
We start by treating the fractional stochastic heat equation\index{fractional ! stochastic heat equation}\index{stochastic heat equation fractional} with
a space-time white noise. We recall the basic properties of the solution,
its relation with the fractional Brownian motion\index{fractional ! Brownian motion} and then we discuss the
estimation of the drift parameter $\theta $ via the $q$-variations.

\subsection{General properties of the solution}%
\label{sec2.1}
On the standard probability space $\left ( \Omega , \mathcal{F},
P\right )$, we consider a centered Gaussian field
$\left ( W(t, A), t\geq 0, A \in \mathcal{B} _{b} (\mathbb{R}) \right )$
with covariance
%
\begin{equation}
\label{covwn}
\mathbf{E} W(t, A) W(s, B)= (s\wedge t) \lambda (A \cap B) \quad \mbox{ for
every } s,t\geq 0, A,B\in \mathcal{B} _{b} (\mathbb{R}),
\end{equation}
where $\lambda $ denotes the Lebesgue measure on $\mathbb{R}$ and
$\mathcal{B} _{b} (\mathbb{R}) $ is the class of bounded Borel subsets
of $\mathbb{R}$. The Gaussian field $W$ is usually called the space-time
white noise.

We will consider the stochastic heat equation\index{stochastic heat equation}
%
\begin{equation}
\label{1}
\frac{\partial u _{\theta } }{\partial t} (t,x)= -\theta (-\Delta )
^{\frac{\alpha }{2}}u_{\theta } (t,x)+ \dot{W} (t,x),\quad
 t\geq 0, x\in \mathbb{R},
\end{equation}
with vanishing initial condition $u(0, x)= 0$ for every $ x\in
\mathbb{R}$. In the above equation, $ (-\Delta ) ^{\frac{\alpha }{2}}$
represents the fractional Laplacian\index{fractional ! Laplacian} of order $\alpha $. We will assume
in the sequel that $\alpha \in (1, 2]$. We refer to \cite{DD,Ja1,Ja2,Jiang} for the precise
definition and other properties of the fractional Laplacian operator.\index{fractional ! Laplacian}
We will denote its Green kernel (or the fundamental solution) by
$ G_{\alpha }$, which represents the deterministic kernel that solves
the heat equation\index{heat equation} without noise $\frac{\partial }{\partial t} u(t,x) =
-(-\Delta ) ^{\frac{\alpha }{2}}u(t,x) $. It is know from the above
references that for $t>0$, $x\in \mathbb{R}$
%
\begin{equation}
\label{G}
G_{\alpha }(t,x) =\int _{\mathbb{R}} e ^{it \xi -t\vert \xi \vert ^{
\alpha }} d\xi .
\end{equation}

It is an immediate
conclusion that the fundamental solution associated to the
operator $-\theta (-\Delta ) ^{\frac{\alpha }{2}}u_{\theta } (t,x)$ is
$G_{\alpha } (\theta t, x)$.

The solution to (\ref{1}) is understood in the mild sense, i.e.,
%
\begin{equation}
\label{sol1}
u_{\theta } (t, x)= \int _{0} ^{t} \int _{\mathbb{R}} G_{\alpha } (
\theta (t-s), x-y) W (ds, dy),
\end{equation}
where the stochastic integral $W (ds, dy)$ is the usual Wiener integral\index{Wiener integral}
with respect to the space-time white noise, which satisfies the isometry
\begin{equation*}
\mathbf{E} \left (\int _{0} ^{T} \int _{\mathbb{R}} H(s,y) W(ds,
dy)\right ) ^{2}= \int _{0} ^{T} \int _{\mathbb{R}} H(s,y)^{2} dyds
\end{equation*}
for every $T>0$ and for every measurable square integrable function
$H$.

For $\theta =1$, the solution to the heat equation\index{heat equation} (\ref{1}) has been
studied in \cite{MahTu}. This solution exists only if the spatial
dimension is $d=1$, and it is connected to the bifractional Brownian
motion.\index{bifractional Brownian motion} Recall that (see \cite{HV,RuTu}), given
constants $H\in (0,1)$ and $K\in (0,1]$, the bifractional Brownian
motion\index{bifractional Brownian motion} (bi-fBm for short) $(B^{H,K}_{t})_{t \geq 0}$ is a centered
Gaussian process with covariance
%
\begin{equation}
\label{cov-bi}
R^{H,K}(t,s) := R(t,s)= \frac{1}{2^{K}}\left ( \left (
t^{2H}+s^{2H}\right )
^{K} -\vert t-s \vert ^{2HK}\right ),
\hskip0.5cm s,t \geq 0.
\end{equation}
In particular, for $K=1$, $B ^{H,}:= B ^{H,1}$ is the fractional
Brownian motion\index{fractional ! Brownian motion} (fBm in the sequel) with the Hurst parameter $H\in (0,1)$.

Let us recall some of the results in \cite{MahTu} which will be
needed in the sequel.
\begin{itemize}%
\item
The mild solution\index{mild solution} (\ref{sol1}) is well-defined. For every $x\in
\mathbb{R}$, the process  ($ u_{1}(t,x)$, $t\geq 0$) coincides
in distribution, modulo a constant, with the bifractional Brownian
motion,\index{bifractional Brownian motion} i.e.,
\begin{equation*}
\left ( u_{1}(t, x), t\geq 0\right ) \equiv ^{(d)} \left (c_{2, \alpha
} B_{t} ^{\frac{1}{2}, 1-\frac{1}{\alpha }}, t\geq 0\right ),
\end{equation*}
where $B ^{\frac{1}{2}, 1-\frac{1}{\alpha }}$ is a bifractional Brownian
motion\index{bifractional Brownian motion} with the Hurst parameters $H=\frac{1}{2}$\index{Hurst parameters} and $K=1-\frac{1}{\alpha
}$ and
%
\begin{equation}
\label{c2a}
c_{2, \alpha } ^{2}= c_{1, \alpha } 2 ^{1-\frac{1}{\alpha }} \mbox{ with
} c_{1, \alpha }=\frac{1}{2\pi (\alpha -1) }\Gamma \left ( \frac{1}{
\alpha }\right ).
\end{equation}
\item
For every $t\geq 0$, we have (see Proposition 3.1 in \cite{FKM})
%
\begin{equation}
\label{2m-1}
\left ( u_{1}(t,x), x\in \mathbb{R}\right ) \equiv ^{(d)} \left ( m
_{\alpha } B ^{ \frac{\alpha -1}{2}} (x) + S_{t}(x), x\in \mathbb{R}
\right ),
\end{equation}
where $B^{\frac{\alpha -1}{2}}$ is a fractional Brownian motion\index{fractional ! Brownian motion} with
the Hurst parameter $\frac{\alpha -1}{2}\in [0, \frac{1}{2}] $,
$(S_{t}(x))_{x\in \mathbb{R}} $ is a centered Gaussian process with
$C^{\infty } $ sample paths and $m_{\alpha } $ is an explicit numerical
constant.

\end{itemize}

The above facts, combined with the decomposition (\ref{5m-1}) of the
bifractional Brownian motion,\index{bifractional Brownian motion} show that the solution to the heat
equation\index{heat equation} can be expressed as the sum of a fBm and a smooth process (we
will call this sum as a perturbed fractional Brownian motion\index{fractional ! Brownian motion}).

\subsection{Variations of the perturbed fractional Brownian motion\index{fractional ! Brownian motion}}%
\label{sec2.2}
Since the process (\ref{sol1}) is connected to the perturbed fBm\index{perturbed fBm} (i.e.,
the sum of a fBm and a smooth Gaussian process\index{Gaussian process}), let us recall some
facts concerning the asymptotic behavior of the variation of the
perturbed fBm.\index{perturbed fBm} Some of the below results are directly taken from
\cite{MahTu} while those concerning the rate of convergence under the
Wasserstein distance\index{Wasserstein distance} are deduced from \cite{NP-book}.

We first define the notion of \emph{(exact) $q$-variation} for
stochastic processes.

\begin{definition}
\label{def1}
Let $A_{1}<A_{2}$, and for $n\geq 1$, let $t_{i}=A_{1} + \frac{i}{n} (A
_{2}-A_{1}) $ for $i=0,\ldots,n$. A continuous stochastic process
$(X_{t}) _{t\geq 0} $ admits a $q$-variation (or a variation of order
$q$) over the interval $[A_{1}, A_{2} ]$ if the sequence
\begin{equation*}
S^{n, q} _{[A_{1}, A_{2}]} (X): = \sum _{i=0} ^{n-1} \left | X_{t_{i+1}}-
X_{t_{i}} \right | ^{q}
\end{equation*}
converges in probability as $n\to \infty $. The limit, when it exists,
is called the exact $q$-variation of $X$ over the interval $[A_{1}, A
_{2}]$.

\end{definition}

If $[A_{1}, A_{2}]=[0,t]$, we will simply denote $S^{n, q} _{t}(X):=S
^{n, q} _{[0,t]} (X)$. Moreover, if $t=1$, we denote $ S ^{q,n}(X):=S
^{n, q} _{t}(X)$. In the case $q=2$ the limit of $S ^{2,n}$ is called
the quadratic variation, while for $q=3$ we have the cubic variation.

Let us recall the following result (see \cite{MahTu}) concerning
the exact variation of the perturbed fractional Brownian motion,\index{fractional ! Brownian motion} i.e.,
the sum of a fBm and a smooth Gaussian process.\index{Gaussian process} In the rest of this
section, we will fix an interval $[A_{1}, A_{2}]$ with $A_{1}<A_{2}$ and
a partition $t_{j}= A_{1}+ \frac{j}{n} (A_{2}-A_{1}) $, $n\geq 1$, $j=0,\ldots, n$, of this interval. Also, we denote by $Z$ a standard normal
random variable, and $\mu _{q} =\mathbf{E} Z ^{q}$ for $q\geq 1$. Define
$\sigma _{H, q } ^{2}= q! \sum _{v\in \mathbb{Z}} \rho _{H}(v) ^{q}$, with
$\rho _{H}(v) =\frac{1}{2} \left ( \vert v+1\vert ^{2H}+ \vert v-1\vert
^{2H} -2\vert v\vert ^{2H}\right )$ for $v\in \mathbb{Z}$.

\vskip0.2cm

\begin{lemma}
\label{l1}
Let $ (B ^{H}_{t}) _{t\geq 0}$ be a fBm with $H\in (0, \frac{1}{2}]$ and
consider a centered Gaussian process $(X_{t}) _{t\geq 0}$ such that
%
\begin{equation}
\label{23a-1}
\mathbf{E} \left | X_{t}- X_{s} \right | ^{2} \leq C \vert t-s\vert
^{2}\quad  \mbox{ for every } s,t \geq 0.
\end{equation}
Define
\begin{equation*}
Y^{H} _{t}= aB^{H}_{t} + X_{t}\quad  \mbox{ for every } t\geq 0
\end{equation*}
with $a\neq0$.
\begin{enumerate}%
\item
The process $Y$ has $\frac{1}{H}$-variation over the interval
$[A_{1}, A_{2}]$ which is equal to
\begin{equation*}
a^{-\frac{1}{H}}\mathbf{E} \vert Z \vert ^{1/H} (A_{2}-A_{1}).
\end{equation*}
\item
Let
%
\begin{equation}
\label{5a-1}
V_{q, n}(Y ^{H} ):=\sum _{i=0} ^{n-1} \left [ \frac{n ^{ Hq}}{ (A_{2}-A
_{1}) ^{qH}a^{q}} ( Y^{H} _{t_{i+1}} - Y ^{H} _{ t_{i}}) ^{q} - \mu
_{q} \right ].
\end{equation}
Then, if $H\in (0, \frac{1}{2})$ and $q\geq 2$ is an integer,
%
\begin{align}
\label{c11}
&\frac{1}{\sqrt{n}}V_{q,n}(Y ^{H} )=\frac{1}{\sqrt{n}}\sum _{i=0}
^{n-1} \left [ \frac{n ^{ Hq}}{ (A_{2}-A_{1}) ^{qH}a^{q}} ( Y^{H}
_{t_{i+1}} - Y ^{H} _{ t_{i}}) ^{q} - \mu _{q} \right ]
\nonumber \\
&\to ^{(d)} N (0,
\sigma _{ H, q } ^{2}).
\end{align}
If $H=\frac{1}{2}$, $q=2$ and the process $(X_{t}) _{t\geq 0}$ is
adapted to the filtration generated by $B$, then
%
\begin{equation}
\label{c12}
\frac{1}{\sqrt{n} } V_{2,n}( Y ^{H})=\frac{1}{\sqrt{n} } \sum _{i=0}
^{n-1} \left [ \frac{n }{ (A_{2}-A_{1}) a^{2}} ( Y ^{\frac{1}{2}}
_{t_{i+1}} - Y ^{\frac{1}{2}} _{ t_{i}}) ^{2} - 1\right ] \to ^{(d)} N
(0, \sigma _{ \frac{1}{2}, 2 } ^{2}).
\end{equation}

\end{enumerate}
\end{lemma}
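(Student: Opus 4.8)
The plan is to work throughout with the decomposition $Y^H = aB^H + X$, treating the fractional Brownian part by the known limit theorems for power variations of the fBm and showing that the smooth perturbation $X$ is asymptotically negligible. The key quantitative input is the moment bound (\ref{23a-1}), which forces the increments of $X$ to be of order $n^{-1}$, hence much smaller than the increments of $B^H$, which are of order $n^{-H}$ with $H\le\frac12$.

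For item (1), I would first recall the classical fact that $aB^H$ has $\frac1H$-variation over $[A_1,A_2]$ equal to the announced constant: the renormalized increments $\frac{n^H}{(A_2-A_1)^H}(B^H_{t_{i+1}}-B^H_{t_i})$ form a stationary standard Gaussian sequence, so an $L^2$ computation (using the summability of $\rho_H$) or the ergodic theorem gives $S^{n,1/H}(aB^H)\to |a|^{1/H}(A_2-A_1)\mathbf{E}|Z|^{1/H}$ in probability. To pass from $aB^H$ to $Y^H$, observe that $v\mapsto(\sum_i|v_i|^{1/H})^{H}$ is the $\ell^{1/H}$-norm (here $\frac1H\ge 2$), so Minkowski's inequality gives $\big|S^{n,1/H}(Y^H)^{H}-S^{n,1/H}(aB^H)^{H}\big|\le S^{n,1/H}(X)^{H}$. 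Since $X$ is Gaussian with $\mathbf{E}|X_{t_{i+1}}-X_{t_i}|^2\le Cn^{-2}$, every moment of its increments is $O(n^{-1/H})$ and $\mathbf{E}\,S^{n,1/H}(X)\le C\,n\cdot n^{-1/H}=Cn^{1-1/H}\to 0$; hence $Y^H$ has the same $\frac1H$-variation as $aB^H$.

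For items (2) and (3) I would expand the power $\bigl(a(B^H_{t_{i+1}}-B^H_{t_i})+(X_{t_{i+1}}-X_{t_i})\bigr)^q$ by the binomial theorem. The leading term reproduces, after the normalization in (\ref{5a-1}), the $q$-th power variation of the fBm. For $H\in(0,\frac12)$ and integer $q\ge 2$, the renormalized increments are a stationary Gaussian sequence with covariance $\rho_H$ satisfying $\sum_v|\rho_H(v)|<\infty$ (since $\rho_H(v)\sim c|v|^{2H-2}$ and $H<\frac12$); the Breuer--Major theorem (see \cite{NP-book}) then applies for every Hermite rank and yields the convergence (\ref{c11}) to $N(0,\sigma_{H,q}^2)$, the variance being identified from the Hermite expansion of $x\mapsto x^q-\mu_q$ together with $\rho_H$. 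In the boundary case (\ref{c12}), $B^{1/2}$ is a Brownian motion, its renormalized increments are i.i.d.\ standard normal, and the statement reduces to the classical central limit theorem for the i.i.d.\ sequence $(\cdot)^2-1$, with limiting variance $\sigma_{1/2,2}^2=2$.

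It then remains to check that every remainder term of the binomial expansion — those carrying at least one factor $X_{t_{i+1}}-X_{t_i}$ — is $o_P(\sqrt n)$, so that it vanishes after division by $\sqrt n$. A term with $k\ge 1$ such factors has, after normalization and summation over the $n$ subintervals, expected modulus of order $n^{1+k(H-1)}$; dividing by $\sqrt n$ gives $n^{1/2+k(H-1)}$, which tends to $0$ as soon as $k(1-H)>\frac12$. For $H\in(0,\frac12)$ this already holds at $k=1$, so all remainders are negligible and no extra hypothesis is needed. The main obstacle is precisely the boundary case $H=\frac12$, $q=2$ of (\ref{c12}): there the cross term $2a\sum_i(B^{1/2}_{t_{i+1}}-B^{1/2}_{t_i})(X_{t_{i+1}}-X_{t_i})$ sits exactly at the threshold, and the crude absolute-value bound only shows it is $O_P(1)$, not $o_P(1)$, after normalization. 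This is where the assumption that $X$ be adapted to the filtration generated by $B$ enters: the cross sum is then of martingale type, and conditioning on $\mathcal{F}_{t_i}$ together with It\^o's isometry and $\mathbf{E}|X_{t_{i+1}}-X_{t_i}|^2\le Cn^{-2}$ makes the off-diagonal contributions vanish, upgrading the bound to $o_P(\sqrt n)$. Controlling this borderline cross term is the crux of the argument, and it is the only place where adaptedness is genuinely required.
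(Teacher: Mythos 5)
Your overall route is the same one the paper relies on: the paper does not actually prove Lemma~\ref{l1} but imports it from \cite{MahTu}, and the argument there (as disclosed in the proof of Proposition~\ref{pp1}) is exactly your decomposition into the power variation of the fBm plus a rest term, with the fBm part handled by the Breuer--Major/chaos machinery of \cite{NP-book} and the rest controlled through (\ref{23a-1}). Your treatment of item (1) via the $\ell^{1/H}$ triangle inequality is correct, your exponent count $n^{1/2+k(H-1)}$ for the mixed binomial terms is right, and you correctly isolate the case $H=\frac{1}{2}$, $q=2$ of (\ref{c12}) as the only place where the crude moment bound is insufficient.

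The one step that does not work as written is the martingale argument for that borderline cross term. The summand $(B_{t_{i+1}}-B_{t_i})(X_{t_{i+1}}-X_{t_i})$ is \emph{not} a martingale increment for the filtration $(\mathcal{F}_{t_i})$, because $X_{t_{i+1}}-X_{t_i}$ is not $\mathcal{F}_{t_i}$-measurable; consequently ``conditioning on $\mathcal{F}_{t_i}$ and applying It\^o's isometry'' does not make the off-diagonal covariances vanish. An additional step is needed: for instance, split $X_{t_{i+1}}-X_{t_i}$ into its predictable projection $\mathbf{E}[X_{t_{i+1}}-X_{t_i}\mid\mathcal{F}_{t_i}]$, for which the martingale-transform isometry does apply and yields a contribution of order $n^{-1}$ in $L^2$ before the $\sqrt{n}$ normalization, plus a martingale remainder that must then be estimated separately (this is where the Gaussian structure and adaptedness of $X$ are really used). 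Note also that the soft alternative via the discrete covariation $[B,X]$ only gives $O(n^{-1/2})$ by Cauchy--Schwarz, which is again exactly at the threshold, so it cannot replace this computation. Everything else in your sketch is sound; this borderline cross term is the single point where your proof is incomplete, and it is precisely the computation carried out in \cite{MahTu}.
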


Using the recent Stein--Malliavin theory, it is also possible to deduce
the rate of convergence in the above Central Limit Theorem (CLT in the
sequel) under the Wasserstein distance.\index{Wasserstein distance} Before stating and proving the
result, let us briefly recall the definition of the Wasserstein
distance.\index{Wasserstein distance} The Wasserstein distance\index{Wasserstein distance} between the laws of two $
\mathbb{R} ^{d}$-valued random variables $F$ and $G$ is defined as
%
\begin{equation}
\label{dw}
d_{W} (F, G)= \sup _{h\in \mathcal{A}}\left | \mathbf{E}h(F)-
\mathbf{E}h(G)\right |
\end{equation}
where $\mathcal{A}$ is the class of Lipschitz continuous function
$h:\mathbb{R} ^{d} \to \mathbb{R}$ such that $\Vert h\Vert _{Lip}
\leq 1$, where
\begin{equation*}
\Vert h\Vert _{Lip}= \sup _{x, y\in \mathbb{R} ^{d}, x\neq y} \frac{
\vert h(x)-h(y)\vert }{\Vert x-y\Vert _{\mathbb{R} ^{d}}}.
\end{equation*}

\begin{prop}
\label{pp1}
Assume $H\leq \frac{1}{2}$. Let $Y^{H}$ be as in Lemma \ref{l1} and let
$V_{q, n}(Y ^{H} )$ be given by (\ref{5a-1}). Then for $n$ large and
with $\sigma _{ H, q }$ from (\ref{c11}),
\begin{equation*}
d_{W} \left ( \frac{1}{\sqrt{n}}V_{q,n}(Y ^{H} ), N (0, \sigma _{ H,
q } ^{2})\right ) \leq C \frac{1}{\sqrt{n}}.
\end{equation*}
\end{prop}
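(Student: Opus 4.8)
The plan is to apply the Stein--Malliavin machinery of \cite{NP-book}. First I would fix the Malliavin calculus framework attached to the Gaussian family generated by the pair $(B^{H}, X)$: let $\mathcal{H}$ be the associated Hilbert space, $W(\cdot)$ the isonormal Gaussian process, and $D$, $L^{-1}$ the Malliavin derivative and the pseudo-inverse of the Ornstein--Uhlenbeck generator. Each increment $Y^{H}_{t_{i+1}}-Y^{H}_{t_{i}}$ is then represented as $W(\varepsilon _{i})$ for a suitable $\varepsilon _{i}\in \mathcal{H}$, and I abbreviate $F_{n}:=\frac{1}{\sqrt{n}}V_{q,n}(Y^{H})$.

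The next step is a chaos decomposition of $F_{n}$. Expanding the $q$-th power of each normalized increment in Hermite polynomials and using the identity $H_{k}(W(e)/\Vert e\Vert _{\mathcal{H}})=\Vert e\Vert _{\mathcal{H}}^{-k}\,I_{k}(e^{\otimes k})$, one writes $F_{n}=\sum _{k=1}^{q} I_{k}(f_{k,n})$ as a finite sum of multiple Wiener--It\^{o} integrals, the centering by $\mu _{q}$ removing the chaos of order $0$. I would then invoke the quantitative CLT bound: for the centered functional $F_{n}$ (which is smooth in the Malliavin sense) and $N\sim N(0,\sigma _{H,q}^{2})$, the method of \cite{NP-book} gives
\begin{equation*}
d_{W}\bigl(F_{n}, N(0,\sigma _{H,q}^{2})\bigr)\leq C\,\sqrt{\mathbf{E}\Bigl[\bigl(\sigma _{H,q}^{2}-\langle DF_{n},-DL^{-1}F_{n}\rangle _{\mathcal{H}}\bigr)^{2}\Bigr]}.
\end{equation*}

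Since $\mathbf{E}\langle DF_{n},-DL^{-1}F_{n}\rangle _{\mathcal{H}}=\mathbf{E}[F_{n}^{2}]=:s_{n}^{2}$, the quantity under the square root splits into the deterministic variance gap $(\sigma _{H,q}^{2}-s_{n}^{2})^{2}$ and the fluctuation $\mathrm{Var}(\langle DF_{n},-DL^{-1}F_{n}\rangle _{\mathcal{H}})$, the latter being controlled by the contraction norms $\Vert f_{k,n}\otimes _{r} f_{k,n}\Vert _{\mathcal{H}^{\otimes 2(k-r)}}$ for $1\le r\le k-1$. It therefore suffices to show that this expectation is of order $n^{-1}$, that is, that the variance gap and each contraction norm are $O(n^{-1/2})$; the square root then produces the announced rate $n^{-1/2}$. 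For the pure fBm part the increments are stationary with correlation $\rho _{H}(v)\sim c\,\vert v\vert ^{2H-2}$, which is summable for $H\le \frac{1}{2}$ and every $k\ge 1$, and the quadruple sums defining $\Vert f_{k,n}\otimes _{r} f_{k,n}\Vert ^{2}$ collapse to $O(n^{-1})$ by a Breuer--Major--type computation, giving the desired $O(n^{-1/2})$ contraction bound.

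The main obstacle is the smooth perturbation $X$: its presence destroys the stationarity of the increments of $Y^{H}$, so the kernels $f_{k,n}$ are not the clean fBm kernels and the inner products $\langle \varepsilon _{i},\varepsilon _{j}\rangle _{\mathcal{H}}$ carry mixed fBm--$X$ and pure $X$ terms. The key point is to exploit the hypothesis $\mathbf{E}\vert X_{t}-X_{s}\vert ^{2}\le C\vert t-s\vert ^{2}$, which makes the $X$-increments of order $n^{-1}$ and hence negligible against the fBm increments of order $n^{-H}$ with $H\le \frac{1}{2}$; one must nevertheless verify that, after summation over the $n$ partition points, these mixed contributions do not accumulate and still decay fast enough to preserve the $O(n^{-1/2})$ bound on the contractions and on the variance gap, uniformly over the chaos order $k$. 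Establishing this uniform dominance of the fBm contribution over the smooth part is the delicate step, whereas the purely Gaussian--chaos estimates are standard.
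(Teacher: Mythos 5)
Your overall strategy --- a direct Stein--Malliavin bound on the full functional $F_{n}=\frac{1}{\sqrt{n}}V_{q,n}(Y^{H})$, with a chaos decomposition whose kernels involve the increments of the \emph{perturbed} process --- is viable in principle, but as written it has a genuine gap exactly where you flag it: you never carry out the verification that the mixed $B^{H}$--$X$ and pure $X$ contributions to the inner products $\langle \varepsilon _{i},\varepsilon _{j}\rangle _{\mathcal{H}}$ do not spoil the $O(n^{-1/2})$ bound on the variance gap and on the contraction norms $\Vert f_{k,n}\otimes _{r}f_{k,n}\Vert$. Since the increments of $Y^{H}$ are neither stationary nor independent of chaos order once $X$ is present, this is not a routine adaptation of the Breuer--Major computation; it is the entire content of the proposition beyond the known fBm case, and calling it ``the delicate step'' does not discharge it. (A further small point: with no adaptedness or independence assumption between $X$ and $B^{H}$ in the general statement of Lemma \ref{l1}, the joint isonormal framework you set up needs some care, though this is fixable.)

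The paper avoids this difficulty entirely by a much cheaper reduction that you should notice: since $d_{W}$ is a supremum over $1$-Lipschitz test functions, $d_{W}(F,G)\leq \mathbf{E}\vert F-G\vert$ for real-valued $F,G$. Writing $\frac{1}{\sqrt{n}}V_{q,n}(Y^{H})=\frac{1}{\sqrt{n}}V_{q,n}(B^{H})+R_{n}$ with $\mathbf{E}\vert R_{n}\vert \leq cn^{H-1}$ (this estimate is already available from the proof of Lemma 2.1 in \cite{MahTu}, and $n^{H-1}\leq n^{-1/2}$ precisely because $H\leq \frac{1}{2}$), the triangle inequality reduces everything to the Wasserstein bound for the \emph{pure} fBm variation, where the increments are stationary and the standard chaos/contraction estimates (Proposition 6.2.2 and Corollary 7.4.3 in \cite{NP-book}) give the rate $n^{-1/2}$. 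In other words, the perturbation should be removed at the level of the statistic, in $L^{1}$, before invoking Stein--Malliavin --- not absorbed into the Malliavin kernels. If you insist on your route you must supply the uniform contraction estimates for the perturbed kernels; if you adopt the paper's reduction, the only input you need beyond the fBm case is the bound $\mathbf{E}\vert R_{n}\vert \leq cn^{H-1}$.
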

\begin{proof}
From the proof of Lemma 2.1 in \cite{MahTu}, we can express the
variation of $Y^{H}$ and the variation of the fBm $B^{H}$ plus a rest
term, i.e.,
\begin{equation*}
\frac{1}{\sqrt{n}} V_{q,n}(Y ^{H} )=\frac{1}{\sqrt{n}} V_{q,n}(B
^{H} )+ R_{n},
\end{equation*}
where $R_{n}$ satisfies, for every $n\geq 1$,
%
\begin{equation}
\label{5a-2}
\mathbf{E} \vert R_{n} \vert \leq cn ^{H-1}.
\end{equation}
By the definition of the Wasserstein distance,\index{Wasserstein distance} we can write
\begin{eqnarray*}
&&d_{W} \left ( \frac{1}{\sqrt{n}}V_{q,n}(Y ^{H} ), N (0,
\sigma _{ H, q } ^{2})\right )
\\
&\leq & d_{W} \left ( \frac{1}{\sqrt{n}}V_{q,n}(B ^{H} ), N (0,
\sigma _{ H, q } ^{2})\right ) + d_{W} \left ( \frac{1}{\sqrt{n}}V
_{q,n}(Y ^{H} ), \frac{1}{\sqrt{n}}V_{q,n}(B ^{H} )\right )
\\
&\leq & d_{W} \left ( \frac{1}{\sqrt{n}}V_{q,n}(B ^{H} ), N (0,
\sigma _{ H, q } ^{2})\right )+ \mathbf{E} \vert R_{n} \vert .
\end{eqnarray*}
In order to estimate $d_{W} ( \frac{1}{\sqrt{n}}V_{q,n}(B ^{H} ), N
(0, \sigma _{ H, q } ^{2}))$, we will use the chaos expansion of the
random variable $V_{q,n}(B ^{H} )$ and several results in
\cite{NP-book}. Notice that (see, e.g., the proof of Corollary 3 in
\cite{NNT}),
\begin{equation*}
V_{q,n}(B ^{H} ) = \sum _{k=1} ^{q} k! C_{q} ^{k} \mu _{q-k} \sum _{i=0}
^{n-1} H_{k}\left ( \frac{ n ^{HK}}{(A_{2}- A_{1}) ^{HK}} \left ( B
^{H}_{t_{i+1} }- B^{H}_{t_{i}}\right ) \right ),
\end{equation*}
where $H_{k}$ is the $k$-th probabilists' Hermite polynomial
\[
H_{k}(x)= (-1) ^{k} e ^{-\frac{x^{2}}{2}} \frac{d^{n}}{dx^{n}}\left (
e ^{-\frac{x^{2}}{2}} \right )
\]
for $k\geq 1$ with $H_{0}(x)=1$. We
know from \cite{NP-book} that the vector
\begin{equation*}
( F_{1,n}, F_{2, n},\ldots, F_{q,n}):= \left ( \frac{1}{\sqrt{n}}
\sum _{i=0} ^{n-1} H_{k}\left ( \frac{ n ^{HK}}{(A_{2}- A_{1}) ^{HK}}
\left ( B^{H}_{t_{i+1} }- B^{H}_{t_{i}}\right ) \right ) \right )
_{k=1,\ldots , q}
\end{equation*}
converges in distribution to a centered Gaussian vector with diagonal
covariance matrix $C$ (the explicit expression of $C$ can be found in
\cite{NP-book}, it is not needed in our work). Moreover,
Proposition 6.2.2 and Corollary 7.4.3 in \cite{NP-book} imply
that
\begin{equation*}
d_{W} \left ( (F_{k, n})_{k=1,\ldots, q}, N(0, C) \right ) \leq c\frac{1}{
\sqrt{n}}.
\end{equation*}
This will easily lead to
%
\begin{equation}
\label{5a-3}%
d_{W} \left ( \frac{1}{\sqrt{n}}V_{q,n}(B ^{H} ), N (0, \sigma _{ H,
q } ^{2})\right )\leq c\frac{1}{\sqrt{n}}.
\end{equation}
Since $H\leq \frac{1}{2}$, we obtain the conclusion via (\ref{5a-2}) and
(\ref{5a-3}).
\end{proof}

\subsection{Estimators for the drift parameter}%
\label{sec2.3}
Our purpose is to estimate the parameter $\theta >0$ based on the observations
of the process~$u_{\theta } $. We will define two estimators: the first
is based on the temporal variations of the process $u_{\theta }$ while
the second is constructed via its variation in space. Their behavior is
strongly related to the law of the process $u_{\theta }$, therefore we
start by analyzing the distribution of this Gaussian process.\index{Gaussian process}

\subsubsection{The law of the solution}
\label{sec2.3.1}
Let $G_{\alpha }(t,x)$ be the Green kernel associated to the operator
$- (-\Delta ) ^{\frac{\alpha }{2}}$. Then the Green kernel associated
to the operator operator $-\theta (-\Delta ) ^{\frac{\alpha }{2}}$ is
\begin{equation*}
G_{\alpha }(\theta t,x).
\end{equation*}

\begin{lemma}
\label{ll1}
Suppose that the process $\left (u_{\theta } (t,x), t\geq 0, x\in
\mathbb{R}\right )$ satisfies (\ref{1}). Define
%
\begin{equation}
\label{uv}
v_{\theta }(t,x):= u_{\theta } \left (\frac{t}{\theta }, x\right ),
\hskip0.4cm t\geq 0, x\in \mathbb{R}.
\end{equation}
Then the process $\left (v_{\theta } (t,x), t\geq 0,
x\in \mathbb{R}\right )$ satisfies
%
\begin{equation}
\label{2}
\frac{\partial v_{\theta }}{\partial t} (t,x)= - (-\Delta ) ^{\frac{
\alpha }{2}}v_{\theta }(t,x)+(\theta ) ^{-\frac{1}{2}}
\dot{\widetilde{W}} (t,x),
\hskip0.4cm t\geq 0, x\in \mathbb{R},
\end{equation}
with $ v_{\theta } (0, x)=0$ for every $x\in \mathbb{R}$, where
$\dot{\widetilde{W}}$ is a space-time white noise, i.e., a centered
Gaussian random field with covariance (\ref{covwn}).
\end{lemma}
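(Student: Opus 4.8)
The plan is to verify directly that the time-rescaled field $v_\theta(t,x) = u_\theta(t/\theta, x)$ satisfies the stated equation by substituting the definition into equation (\ref{1}) and tracking how the scaling interacts with the time derivative and the white noise. The core computation is a change of variables $s = t/\theta$. First I would differentiate: writing $v_\theta(t,x) = u_\theta(t/\theta, x)$, the chain rule gives
\begin{equation*}
\frac{\partial v_\theta}{\partial t}(t,x) = \frac{1}{\theta}\,\frac{\partial u_\theta}{\partial s}\left(\frac{t}{\theta}, x\right).
\end{equation*}
Since $u_\theta$ solves (\ref{1}), the right-hand side equals $\frac{1}{\theta}\bigl(-\theta(-\Delta)^{\frac{\alpha}{2}} u_\theta(t/\theta, x) + \dot W(t/\theta, x)\bigr)$. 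The factor $\theta$ in front of the fractional Laplacian cancels the $\frac{1}{\theta}$, leaving exactly $-(-\Delta)^{\frac{\alpha}{2}} v_\theta(t,x)$ plus a rescaled noise term $\frac{1}{\theta}\dot W(t/\theta, x)$, which matches the drift-free operator in (\ref{2}). The vanishing initial condition is immediate since $v_\theta(0,x) = u_\theta(0,x) = 0$.

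The main point requiring care — and the step I expect to be the genuine obstacle — is justifying that the rescaled noise $\dot{\widetilde W}(t,x) := \theta^{-1/2}\,\theta^{1/2}\,\dot W(t/\theta, x)$ is again a bona fide space-time white noise with the covariance (\ref{covwn}), and identifying the correct scalar $\theta^{-1/2}$. The white noise $\dot W$ is a distributional object, so rather than manipulate $\dot W(t/\theta,x)$ pointwise I would work at the level of the Wiener integrals or, equivalently, the mild formulation (\ref{sol1}). Substituting the mild solution and performing the change of variable $\theta(t/\theta - s) = \theta(t/\theta) - \theta s$ inside the Green kernel, one sees the solution $v_\theta$ can be rewritten as a Wiener integral against a time-scaled white measure; the scaling covariance (\ref{covwn}), which is linear in $s\wedge t$ (hence homogeneous of degree one under time dilation), is precisely what produces the factor $\theta^{-1/2}$ after normalizing the rescaled noise to have unit intensity. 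I would make this rigorous by defining $\widetilde W(t,A) := \theta^{1/2} W(t/\theta, A)$ and checking from (\ref{covwn}) that
\begin{equation*}
\mathbf{E}\,\widetilde W(t,A)\,\widetilde W(s,B) = \theta\left(\tfrac{s}{\theta}\wedge \tfrac{t}{\theta}\right)\lambda(A\cap B) = (s\wedge t)\,\lambda(A\cap B),
\end{equation*}
so $\widetilde W$ has exactly the covariance (\ref{covwn}) and is therefore a space-time white noise.

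With $\widetilde W$ identified, the final step is bookkeeping: I would confirm that the substitution converts $\dot W(t/\theta,x)$ into $\theta^{-1/2}\dot{\widetilde W}(t,x)$, matching the coefficient in (\ref{2}), and then conclude that $v_\theta$ solves the drift-free fractional heat equation with noise intensity $\theta^{-1/2}$. The whole argument is essentially a single scaling identity, so once the distributional justification of the noise rescaling is in place the remaining verification is routine. The only subtlety worth flagging is to phrase everything through the Wiener-integral isometry rather than through formal pointwise derivatives of the white noise, to keep the manipulation of $\dot W$ legitimate.
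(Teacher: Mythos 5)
Your proposal is correct and follows essentially the same route as the paper: both pass to the mild formulation (\ref{sol1}), perform the change of variable $s\mapsto \theta s$ inside the Wiener integral, and identify the rescaled noise $\widetilde W(t,A)=\theta^{1/2}W(t/\theta,A)$ as a space-time white noise via the covariance (\ref{covwn}) (the paper simply invokes the scaling property where you write out the covariance check explicitly). The preliminary chain-rule computation you give is just motivation and does not change the substance of the argument.
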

\begin{proof}
From (\ref{sol1}), we have for every $t\geq 0$, $x\in \mathbb{R}$,
\begin{eqnarray*}
v_{\theta } (t, x) =u_{\theta } \left ( \frac{t}{\theta }, x\right )
&=&
\int _{0} ^{\frac{t}{\theta } } \int _{\mathbb{R}} G_{\alpha } (t-
\theta s, x-y) W(ds, dy)
\\
&=& \int _{0} ^{t} \int _{\mathbb{R}} G_{\alpha } (t-s, x-y) W (d\frac{s}{
\theta }, dy)
\\
&=&\theta ^{-\frac{1}{2} } \int _{0} ^{t} \int _{\mathbb{R}} G_{\alpha }
(t-s, x-y) \tilde{W} (ds, dy),
\end{eqnarray*}
where, for $t\geq 0$, $A\in \mathcal{B}(\mathbb{R} )$, we denoted
$\tilde{W} (t, A):= \theta ^{\frac{1}{2}} W \left (\frac{t}{\theta },
A\right )$. Notice that $\tilde{W}$ has the same finite dimensional
distributions\index{finite dimensional distributions} as $W$, due to the scaling property of the white noise.
\end{proof}

We can deduce the law of the process $u_{\theta }$ in time and space.

\begin{prop}
\label{pp2}
For every $x \in \mathbb{R}$ and $\theta >0$, we have
\begin{equation*}
\left ( u_{\theta } (t,x), t\geq 0\right ) \equiv ^{(d)} \left (
\theta ^{-\frac{1}{2\alpha }} c_{2, \alpha } B_{t} ^{\frac{1}{2}, 1-\frac{1}{
\alpha }}, t\geq 0\right ),
\end{equation*}
where $B ^{\frac{1}{2}, 1-\frac{1}{\alpha }} $ is a bifractional
Brownian motion\index{bifractional Brownian motion} with parameters $H=\frac{1}{2} $ and $K= 1-\frac{1}{
\alpha }$ and $c_{2, \alpha }$ is given by (\ref{c2a}).
\end{prop}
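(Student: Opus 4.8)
The plan is to reduce the problem to the already known temporal law of the standard solution $u_1$ by means of the time-scaling device of Lemma \ref{ll1}, and then to absorb the induced time change using the self-similarity of the bifractional Brownian motion. Concretely, I would pass from $u_\theta$ to $v_\theta(t,x)=u_\theta(t/\theta,x)$, identify $v_\theta$ in law with a scalar multiple of $u_1$, invoke the bi-fBm representation of $u_1$, undo the time change, and finally collapse the two resulting powers of $\theta$ into the single exponent $-\tfrac{1}{2\alpha}$.

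First I would use the explicit computation inside the proof of Lemma \ref{ll1}, which exhibits
\[
v_\theta(t,x)=\theta^{-\frac12}\int_0^t\int_{\mathbb{R}}G_\alpha(t-s,x-y)\,\tilde W(ds,dy),
\]
where $\tilde W$ is a space-time white noise with the same finite dimensional distributions as $W$. Since the integral on the right is exactly the mild solution of (\ref{1}) with $\theta=1$ driven by $\tilde W$, this gives the process-level identity in law
\[
\bigl(v_\theta(t,x),\,t\geq 0\bigr)\equiv^{(d)}\bigl(\theta^{-\frac12}u_1(t,x),\,t\geq 0\bigr).
\]

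Next I would apply the temporal representation of $u_1$ recalled from \cite{MahTu}, namely $(u_1(t,x))_{t\geq 0}\equiv^{(d)}(c_{2,\alpha}B_t^{\frac12,1-\frac1\alpha})_{t\geq 0}$, to obtain $(v_\theta(t,x))_{t\geq 0}\equiv^{(d)}(\theta^{-\frac12}c_{2,\alpha}B_t^{\frac12,1-\frac1\alpha})_{t\geq 0}$. Undoing the time change through $u_\theta(t,x)=v_\theta(\theta t,x)$ then yields
\[
\bigl(u_\theta(t,x),\,t\geq 0\bigr)\equiv^{(d)}\bigl(\theta^{-\frac12}c_{2,\alpha}B_{\theta t}^{\frac12,1-\frac1\alpha},\,t\geq 0\bigr).
\]

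The final step is self-similarity. Reading off from the covariance (\ref{cov-bi}) the identity $R^{H,K}(\theta t,\theta s)=\theta^{2HK}R^{H,K}(t,s)$, the process $B^{H,K}$ is $HK$-self-similar, so with $H=\tfrac12$ and $K=1-\tfrac1\alpha$ one has $HK=\tfrac{\alpha-1}{2\alpha}$ and
\[
\bigl(B_{\theta t}^{\frac12,1-\frac1\alpha},\,t\geq 0\bigr)\equiv^{(d)}\bigl(\theta^{\frac{\alpha-1}{2\alpha}}B_t^{\frac12,1-\frac1\alpha},\,t\geq 0\bigr).
\]
Combining the noise-amplitude factor $\theta^{-\frac12}$ with this self-similarity factor and simplifying $-\tfrac12+\tfrac{\alpha-1}{2\alpha}=-\tfrac{1}{2\alpha}$ produces the constant $\theta^{-\frac1{2\alpha}}c_{2,\alpha}$, which is the claim. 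I do not expect a genuine obstacle here: the argument is a chain of distributional identities, and the only point requiring care is the bookkeeping of the scaling exponents, namely checking that the $\theta^{-1/2}$ coming from the noise and the $\theta^{HK}$ coming from self-similarity are combined with the correct signs so as to collapse to $\theta^{-1/(2\alpha)}$.
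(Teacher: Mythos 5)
Your proposal is correct and follows essentially the same route as the paper: both reduce to $u_{1}$ via the time change $v_{\theta }(t,x)=u_{\theta }(t/\theta ,x)\equiv ^{(d)}\theta ^{-1/2}u_{1}(t,x)$ and then rescale time. The paper carries out the last step by computing the covariance $\mathbf{E}\,u_{\theta }(t,x)u_{\theta }(s,x)=\theta ^{-1}\mathbf{E}\,u_{1}(\theta t,x)u_{1}(\theta s,x)$ explicitly and factoring out $\theta ^{1-\frac{1}{\alpha }}$, which is exactly the $HK=\frac{\alpha -1}{2\alpha }$ self-similarity of the bi-fBm that you invoke, so the two arguments coincide up to presentation.
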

\begin{proof}
Fix $x\in \mathbb{R}$ and $\theta >0$. Then for every $s, t\geq 0$, we
have
\begin{align*}
& \mathbf{E} u_{\theta } (t, x) u_{\theta } (s, x) = \mathbf{E} v
_{\theta } (\theta t, x) v_{\theta }(\theta s, x)
\\
={}& \theta ^{-1} \mathbf{E} u_{1} (\theta t, x) u_{1} (\theta s, x) =
\theta ^{-1} c_{1, \alpha } \left [ (\theta t+\theta s) ^{1-\frac{1}{
\alpha }} -\vert \theta t -\theta s \vert ^{1-\frac{1}{\alpha }} \right ]
\\
={}& \theta ^{-\frac{1}{\alpha }} c_{2, \alpha } ^{2} \mathbf{E} B ^{
\frac{1}{2}, 1-\frac{1}{\alpha }} _{t} B ^{\frac{1}{2}, 1-\frac{1}{
\alpha }} _{s}.\qedhere
\end{align*}

\end{proof}

\begin{prop}
\label{pp3}
For every $t\geq 0$, $\theta >0$, we have the following equality in
distribution
\begin{equation*}
\left ( u_{\theta }(t,x), x\in \mathbb{R}\right ) \equiv ^{(d)} \left (
\theta ^{-\frac{1}{2}} m_{\alpha } B ^{ \frac{\alpha -1}{2}} (x) + S
_{\theta t}(x), x\in \mathbb{R} \right ) ,
\end{equation*}
where $B^{\frac{\alpha -1}{2}}$ is a fractional Brownian motion\index{fractional ! Brownian motion} with
the Hurst parameter $\frac{\alpha -1}{2}\in (0, \frac{1}{2}] $,
$(S_{\theta t}(x))_{x\in \mathbb{R}} $ is a centered Gaussian process
with $C^{\infty } $ sample paths and $m_{\alpha }$ from (\ref{2m-1}).

\end{prop}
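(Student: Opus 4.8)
The plan is to reduce the spatial law of $u_\theta$ at a fixed time to the spatial decomposition (\ref{2m-1}) already available for the solution $u_1$ (the case $\theta=1$), rather than attempting a direct covariance computation as in Proposition \ref{pp2}. Indeed, since the spatial law of $u_1$ is itself described by the nontrivial decomposition (\ref{2m-1}) into a fBm plus a smooth Gaussian process, the cleanest route is to transport that decomposition through the scaling identity of Lemma \ref{ll1}.

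First I would record the time-change relation obtained by inverting the definition (\ref{uv}), namely $u_\theta(t,x) = v_\theta(\theta t, x)$ for all $t\geq 0$ and $x\in\mathbb{R}$. Next, from the representation derived inside the proof of Lemma \ref{ll1},
\begin{equation*}
v_\theta(t,x) = \theta^{-\frac{1}{2}} \int_0^t \int_{\mathbb{R}} G_\alpha(t-s, x-y)\, \tilde{W}(ds, dy),
\end{equation*}
with $\tilde W$ a space-time white noise having the same finite-dimensional distributions as $W$. Comparing this with the mild solution (\ref{sol1}) at $\theta=1$ and using $\tilde W \equiv^{(d)} W$ as random fields, I would deduce the joint identity in law $(v_\theta(t,x))_{t,x} \equiv^{(d)} (\theta^{-\frac{1}{2}} u_1(t,x))_{t,x}$. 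Freezing $t$ and reading these two facts as statements about the process in $x$ then gives
\begin{equation*}
(u_\theta(t,x), x\in\mathbb{R}) = (v_\theta(\theta t, x), x\in\mathbb{R}) \equiv^{(d)} (\theta^{-\frac{1}{2}} u_1(\theta t, x), x\in\mathbb{R}).
\end{equation*}

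Finally I would apply (\ref{2m-1}) with time argument $\theta t$, so that $(u_1(\theta t, x), x\in\mathbb{R}) \equiv^{(d)} (m_\alpha B^{\frac{\alpha-1}{2}}(x) + S_{\theta t}(x), x\in\mathbb{R})$, and multiply by $\theta^{-\frac{1}{2}}$. The fBm term scales exactly into $\theta^{-\frac{1}{2}} m_\alpha B^{\frac{\alpha-1}{2}}$, which is the advertised leading term. The only point needing a word of care --- and the sole subtlety in an otherwise bookkeeping argument --- is that the factor $\theta^{-\frac{1}{2}}$ on the remainder is harmless: multiplying a centered Gaussian field with $C^\infty$ paths by a deterministic constant again produces a centered Gaussian field with $C^\infty$ paths, so $\theta^{-\frac{1}{2}} S_{\theta t}$ may simply be re-absorbed into the notation $S_{\theta t}$. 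The asserted range $\frac{\alpha-1}{2}\in(0, \frac{1}{2}]$ follows from $\alpha\in(1,2]$ and is consistent with the (slightly larger) range recorded in (\ref{2m-1}).
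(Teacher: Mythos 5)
Your proposal is correct and follows essentially the same route as the paper: writing $u_{\theta }(t,x)=v_{\theta }(\theta t,x)$, identifying the law of $v_{\theta }$ with $\theta ^{-\frac{1}{2}}u_{1}$ via the scaling of the white noise from Lemma \ref{ll1}, and then invoking (\ref{2m-1}) at time $\theta t$. Your explicit remark that the constant $\theta ^{-\frac{1}{2}}$ multiplying the smooth remainder is simply re-absorbed into the notation $S_{\theta t}$ is a point the paper leaves implicit, but it does not change the argument.
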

\begin{proof}
The result is immediate since for every $t>0$, $\theta >0$
\begin{equation*}
\left ( u_{\theta }(t,x), x\in \mathbb{R}\right )= \left ( v_{\theta
}(\theta t,x), x\in \mathbb{R}\right )\equiv ^{(d)} \theta ^{-
\frac{1}{2} }\left ( u_{1} (\theta t, x), x\in \mathbb{R}\right )
\end{equation*}
\begin{equation*}
\equiv ^{(d)} \left ( \theta ^{-\frac{1}{2}} m_{\alpha } B ^{ \frac{
\alpha -1}{2}} (x) + S_{\theta t}(x), x\in \mathbb{R} \right ),
\end{equation*}
where we used (\ref{2m-1}).
\end{proof}

Notice that the Hurst parameter of the fBm in Proposition \ref{pp3} may
be $\frac{1}{2}$ if $\alpha =2$.

\subsubsection{Estimators based on the temporal variation}%
\label{sec2.3.2}
Proposition \ref{pp2} indicates that the process $u_{\theta }$ behaves
as a bi-fBm in time. Recall the following connection between the fBm and
the bi-fBm (see \cite{LN}): Let $H\in (0, 1)$, $K\in (0, 1]$. If
$ (B ^{HK} _{t}) _{t\geq 0} $ is a fBm with the Hurst parameter $HK$ and
$ ( B^{H, K}_{t}) _{t\geq 0} $ is a bi-fBm, then
%
\begin{equation}
\label{5m-1}
\left ( C _{1} X ^{H, K} _{t}+ B_{t} ^{H, K}, t\geq 0\right ) \equiv
^{(d)} \left ( C_{2} B _{t} ^{HK}, t\geq 0\right ),
\end{equation}
with $C_{1}>0$ and $ C_{2}= 2^{\frac{ 1-K}{2}}$. In (\ref{5m-1}),
$X ^{H, K}$ is a Gaussian process,\index{Gaussian process} independent of $B^{H, K}$ with
$C^{\infty }$ sample paths. In particular, it satisfies (\ref{23a-1}).
Therefore, the bi-fBm is a perturbed fBm\index{perturbed fBm} and the same holds true for the
solution $(u_{\theta }(t, x), t\geq 0)$, by Proposition \ref{pp2}.
Therefore, we obtain, by using the notation $t_{j}= A_{1}+
\frac{j}{n} (A_{2}-A_{1}) $, $n\geq 1$, $j=0,\ldots , n$, the following lemma.

\begin{lemma}
Let $u_{\theta } $ be the solution to (\ref{1}). Then for every
$x\in \mathbb{R}$,
%
\begin{eqnarray}
\label{28f-3}
&&S^{n, \frac{2\alpha }{\alpha -1}} _{[A_{1}, A_{2}]} : = \sum _{i=0}
^{n-1} \left | u_{\theta }(t_{j+1}, x)- u_{\theta } (t_{j}, x) \right |
^{\frac{2\alpha }{\alpha -1}}
\nonumber \\
&& \to _{n\to \infty }c_{2, \alpha } ^{\frac{2\alpha }{\alpha -1}} 2
^{\frac{1}{\alpha -1}}\mu _{\frac{2\alpha }{\alpha -1}} (A_{2}-A_{1})
\vert (\theta ) \vert ^{\frac{-1}{\alpha -1}}
\end{eqnarray}
in probability.
\end{lemma}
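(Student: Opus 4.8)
The plan is to reduce the claim to a direct application of part~(1) of Lemma~\ref{l1}, after rewriting the temporal law of $u_\theta$ as a perturbed fractional Brownian motion. First I would invoke Proposition~\ref{pp2}, which gives, for fixed $x\in\mathbb{R}$,
\begin{equation*}
\left(u_\theta(t,x),\, t\geq 0\right)\equiv^{(d)}\left(\theta^{-\frac{1}{2\alpha}}c_{2,\alpha}B_t^{\frac{1}{2},\,1-\frac{1}{\alpha}},\, t\geq 0\right),
\end{equation*}
so that in the time variable $u_\theta$ behaves like a scaled bi-fBm with $H=\frac12$ and $K=1-\frac{1}{\alpha}$. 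Since the sum $S^{n,q}_{[A_1,A_2]}$ is a measurable functional of the whole trajectory, this identity in law transfers any in-probability convergence of $S^{n,q}$ to a deterministic constant from the scaled bi-fBm back to $u_\theta$; this transfer is the one genuinely conceptual step, and I would justify it by noting that convergence in probability to a constant is equivalent to convergence in law to that constant, which is preserved under equality of finite-dimensional distributions of continuous processes.

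Next I would feed the decomposition~(\ref{5m-1}) into this picture. With $H=\frac12$, $K=1-\frac{1}{\alpha}$ one has $HK=\frac{\alpha-1}{2\alpha}$ and $C_2=2^{\frac{1-K}{2}}=2^{\frac{1}{2\alpha}}$, and~(\ref{5m-1}) exhibits $B^{\frac12,\,1-\frac1\alpha}$ as a perturbed fBm of Hurst index $H':=\frac{\alpha-1}{2\alpha}\in(0,\frac{1}{2}]$ (using $\alpha\in(1,2]$), the $C^\infty$ perturbation satisfying~(\ref{23a-1}). Combining this with Proposition~\ref{pp2}, the process $u_\theta(\cdot,x)$ has, in distribution, exactly the form $Y^{H'}_t=aB_t^{H'}+X'_t$ of Lemma~\ref{l1}, with
\begin{equation*}
a=\theta^{-\frac{1}{2\alpha}}c_{2,\alpha}\,2^{\frac{1}{2\alpha}},\qquad H'=\frac{\alpha-1}{2\alpha},
\end{equation*}
and $X'$ a smooth Gaussian process. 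The key arithmetic observation is that $\frac{1}{H'}=\frac{2\alpha}{\alpha-1}$, so the variation order in the statement is precisely the $\frac{1}{H'}$-variation covered by part~(1) of Lemma~\ref{l1}.

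It then remains to read off the constant. Part~(1) of Lemma~\ref{l1} gives that $S^{n,\frac{2\alpha}{\alpha-1}}_{[A_1,A_2]}$ converges in probability to $a^{\frac{1}{H'}}\,\mathbf{E}|Z|^{\frac{1}{H'}}\,(A_2-A_1)$: the leading fractional term contributes this amount, while the $C^\infty$ perturbation contributes nothing because $\frac{1}{H'}=\frac{2\alpha}{\alpha-1}>2$, so by~(\ref{23a-1}) its contribution is $O(n^{1-\frac{1}{H'}})\to0$. Raising $a$ to the power $\frac{1}{H'}=\frac{2\alpha}{\alpha-1}$ and using the elementary identities
\begin{equation*}
\left(\theta^{-\frac{1}{2\alpha}}\right)^{\frac{2\alpha}{\alpha-1}}=\theta^{-\frac{1}{\alpha-1}},\qquad\left(2^{\frac{1}{2\alpha}}\right)^{\frac{2\alpha}{\alpha-1}}=2^{\frac{1}{\alpha-1}},\qquad\mathbf{E}|Z|^{\frac{2\alpha}{\alpha-1}}=\mu_{\frac{2\alpha}{\alpha-1}},
\end{equation*}
the limit becomes $c_{2,\alpha}^{\frac{2\alpha}{\alpha-1}}2^{\frac{1}{\alpha-1}}\mu_{\frac{2\alpha}{\alpha-1}}(A_2-A_1)\theta^{-\frac{1}{\alpha-1}}$, which is exactly the asserted value. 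I expect the only delicate point to be the bookkeeping of the constants $c_{2,\alpha}$, $C_2=2^{1/(2\alpha)}$ and the $\theta$-scaling, together with the transfer via equality in law described above; everything else is a direct quotation of Lemma~\ref{l1}(1).
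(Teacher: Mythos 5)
Your proposal is correct and is essentially the argument the paper intends: the lemma is presented as an immediate consequence of Proposition~\ref{pp2} combined with the decomposition (\ref{5m-1}), which exhibits $u_{\theta }(\cdot ,x)$ in law as a perturbed fBm with Hurst index $\frac{\alpha -1}{2\alpha }$ and scale $a=\theta ^{-\frac{1}{2\alpha }}c_{2,\alpha }2^{\frac{1}{2\alpha }}$, after which Lemma~\ref{l1}(1) is quoted, exactly as you do. You also correctly read the limiting constant as $a^{\frac{1}{H}}\mathbf{E}\vert Z\vert ^{1/H}(A_{2}-A_{1})$ (the exponent $-\frac{1}{H}$ printed in Lemma~\ref{l1}(1) is a sign typo, as your final constant matching (\ref{28f-3}) confirms), and your justification of the transfer of convergence in probability to a constant through equality of finite-dimensional distributions is the right way to make that step rigorous.
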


Relation (\ref{28f-3}) motivates the definition of the following
estimator for the parameter $\theta >0$ of the model (\ref{1}):
%
\begin{eqnarray}
\label{hatt}
&&\widehat{\theta }_{n,1}
\nonumber
\\
&=& \left ( \left ( c_{2, \alpha } ^{\frac{2\alpha }{\alpha -1}} 2^{\frac{1}{
\alpha -1}}\mu _{\frac{2\alpha }{\alpha -1}} (A_{2}-A_{1})\right )
^{-1}\sum _{i=0} ^{n-1} \left | u_{\theta }(t_{j+1}, x)-u_{\theta }(t
_{j}, x)\right | ^{\frac{2\alpha }{\alpha -1} } \right ) ^{1-\alpha }
\nonumber \\
&=& \ \left ( c_{2, \alpha } ^{\frac{2\alpha }{\alpha -1}} 2^{\frac{1}{
\alpha -1}}\mu _{\frac{2\alpha }{\alpha -1}} (A_{2}-A_{1})\right )
^{\alpha -1}\left ( S^{n, \frac{2\alpha }{\alpha -1}}(u_{\theta } (
\cdot , x)) \right )^{1-\alpha },
\end{eqnarray}
and so
%
\begin{equation}
\label{5n-1}
\widehat{\theta } _{n,1} ^{\frac{1}{1-\alpha }} = \frac{1}{ c_{2,
\alpha } ^{\frac{2\alpha }{\alpha -1}} 2^{\frac{1}{\alpha -1}}
\mu _{\frac{2\alpha }{\alpha -1}} (A_{2}-A_{1})} \sum _{i=0} ^{n-1}
\left | u_{\theta }(t_{j+1}, x)-u_{\theta }(t_{j}, x)\right | ^{\frac{2
\alpha }{\alpha -1} }.
\end{equation}

We 
will prove the consistency and the asymptotic normality of the above
estimator.

\begin{prop}
\label{pp4}
Assume $q:=\frac{2\alpha }{\alpha -1}$ is an even integer and consider
the estimator $\widehat{\theta }_{n,1} $ defined by (\ref{hatt}). Then
$\widehat{\theta }_{n,1} \to _{n \to \infty }\theta $ in probability and
%
\begin{equation}
\label{5m-2}
\sqrt{n} \left [ \widehat{\theta }_{n,1} ^{\frac{1}{1-\alpha } } -
\theta ^{\frac{1}{1-\alpha } }\right ] \to ^{(d)}N(0, s_{1,\theta ,
\alpha } ^{2}) \mbox{ with }s_{1, \theta , \alpha } ^{2} =
\sigma _{\frac{1}{q}, q} ^{2}\theta ^{\frac{2}{1-\alpha }}
\mu _{\frac{2\alpha }{\alpha -1} } ^{-2}.
\end{equation}
Moreover, for $n$ large enough
\begin{equation*}
d_{W}\left ( \sqrt{n} \left [ \widehat{\theta }_{n,1} ^{\frac{1}{1-
\alpha } } -\theta ^{\frac{1}{1-\alpha } }\right ] , N(0, s_{\theta ,
\alpha } ^{2})\right ) \leq c\frac{1}{\sqrt{n}}.
\end{equation*}
\end{prop}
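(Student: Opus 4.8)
The plan is to reduce the entire statement to the variation results for the perturbed fBm already established in Lemma~\ref{l1} and Proposition~\ref{pp1}, the key preliminary being to realize $u_{\theta}(\cdot ,x)$ as a perturbed fBm with the correct parameters. By Proposition~\ref{pp2}, $(u_{\theta}(t,x))_{t\geq 0}\equiv^{(d)}(\theta^{-\frac{1}{2\alpha}}c_{2,\alpha}B_{t}^{\frac{1}{2},1-\frac{1}{\alpha}})_{t\geq 0}$, and I would apply the decomposition (\ref{5m-1}) with $H=\frac{1}{2}$ and $K=1-\frac{1}{\alpha}$, so that the dominant fBm has Hurst index $HK=\frac{\alpha-1}{2\alpha}=\frac{1}{q}$ and the constant is $C_{2}=2^{\frac{1}{2\alpha}}$. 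This writes $(u_{\theta}(t,x))_{t\geq 0}$ in distribution as $aB^{\frac{1}{q}}_{t}+X_{t}$, where $a=\theta^{-\frac{1}{2\alpha}}c_{2,\alpha}2^{\frac{1}{2\alpha}}$ and $X$ is a smooth Gaussian process satisfying (\ref{23a-1}). A short computation gives $a^{q}=c_{2,\alpha}^{\frac{2\alpha}{\alpha-1}}2^{\frac{1}{\alpha-1}}\theta^{\frac{-1}{\alpha-1}}$, and since $\frac{1}{q}=\frac{\alpha-1}{2\alpha}\in(0,\frac{1}{4}]\subset(0,\frac{1}{2})$, the process $u_{\theta}(\cdot ,x)$ fits exactly the framework of Lemma~\ref{l1} in the regime covered by (\ref{c11}).

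Next I would match the estimator to the normalized variation $V_{q,n}$ of (\ref{5a-1}). Because $q=\frac{2\alpha}{\alpha-1}$ is assumed to be an even integer, one has $(u_{\theta}(t_{i+1},x)-u_{\theta}(t_{i},x))^{q}=|u_{\theta}(t_{i+1},x)-u_{\theta}(t_{i},x)|^{q}$, and using $Hq=1$ (so $n^{Hq}=n$ and $(A_{2}-A_{1})^{qH}=A_{2}-A_{1}$) in (\ref{5a-1}) one checks the exact identity $\frac{1}{\sqrt{n}}V_{q,n}(u_{\theta}(\cdot ,x))=\frac{\sqrt{n}}{(A_{2}-A_{1})a^{q}}[S^{n,q}(u_{\theta}(\cdot ,x))-L]$, where $L=(A_{2}-A_{1})a^{q}\mu_{q}$ is precisely the limit appearing in (\ref{28f-3}). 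Consistency is then immediate: by (\ref{5n-1}) one has $\widehat{\theta}_{n,1}^{\frac{1}{1-\alpha}}=C^{-1}S^{n,q}\to C^{-1}L=\theta^{\frac{1}{1-\alpha}}$ in probability, with $C=c_{2,\alpha}^{\frac{2\alpha}{\alpha-1}}2^{\frac{1}{\alpha-1}}\mu_{q}(A_{2}-A_{1})$, and applying the continuous mapping theorem to the map $x\mapsto x^{1-\alpha}$ on $(0,\infty)$ yields $\widehat{\theta}_{n,1}\to\theta$.

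For the central limit theorem I would exploit that, by (\ref{5n-1}), the transformed estimator is an \emph{exact} affine function of $S^{n,q}$, so no delta method is needed. Writing $\kappa:=C^{-1}(A_{2}-A_{1})a^{q}$, the bookkeeping gives $\kappa=\theta^{\frac{1}{1-\alpha}}\mu_{q}^{-1}$, and combining the two displays above yields $\sqrt{n}[\widehat{\theta}_{n,1}^{\frac{1}{1-\alpha}}-\theta^{\frac{1}{1-\alpha}}]=C^{-1}\sqrt{n}[S^{n,q}-L]=\kappa\,\frac{1}{\sqrt{n}}V_{q,n}(u_{\theta}(\cdot ,x))$. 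By part (2) of Lemma~\ref{l1}, relation (\ref{c11}), the right-hand side converges in distribution to $N(0,\kappa^{2}\sigma_{\frac{1}{q},q}^{2})$, and $\kappa^{2}\sigma_{\frac{1}{q},q}^{2}=\sigma_{\frac{1}{q},q}^{2}\theta^{\frac{2}{1-\alpha}}\mu_{\frac{2\alpha}{\alpha-1}}^{-2}=s_{1,\theta,\alpha}^{2}$, which is exactly (\ref{5m-2}).

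Finally, the Berry--Ess\'{e}en bound would follow from Proposition~\ref{pp1} together with the scalar-scaling property of the Wasserstein distance, namely $d_{W}(\kappa U,\kappa V)=|\kappa|\,d_{W}(U,V)$ (a $1$-Lipschitz test function composed with multiplication by $\kappa$ is $|\kappa|$-Lipschitz). Since $\sqrt{n}[\widehat{\theta}_{n,1}^{\frac{1}{1-\alpha}}-\theta^{\frac{1}{1-\alpha}}]=\kappa\,\frac{1}{\sqrt{n}}V_{q,n}(u_{\theta}(\cdot ,x))$ and $\kappa\,N(0,\sigma_{\frac{1}{q},q}^{2})=N(0,s_{1,\theta,\alpha}^{2})$, Proposition~\ref{pp1} gives $d_{W}(\sqrt{n}[\widehat{\theta}_{n,1}^{\frac{1}{1-\alpha}}-\theta^{\frac{1}{1-\alpha}}],N(0,s_{1,\theta,\alpha}^{2}))=|\kappa|\,d_{W}(\frac{1}{\sqrt{n}}V_{q,n},N(0,\sigma_{\frac{1}{q},q}^{2}))\leq c\,n^{-1/2}$. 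The only parts requiring real care are the exact bookkeeping of the constants $a$, $C$ and $\kappa$ so that the limiting variance collapses precisely to $s_{1,\theta,\alpha}^{2}$, and the use of the even-integer hypothesis on $q$ to pass between the signed $q$-th powers in $V_{q,n}$ and the absolute $q$-variation $S^{n,q}$ defining the estimator; everything else is an application of the already-established results.
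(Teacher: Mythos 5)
Your proposal is correct and follows essentially the same route as the paper: realize $u_{\theta}(\cdot,x)$ as a perturbed fBm with Hurst index $\frac{\alpha-1}{2\alpha}=\frac{1}{q}$ via Proposition~\ref{pp2} and the decomposition (\ref{5m-1}), identify the estimator with the normalized variation $V_{q,n}$ of Lemma~\ref{l1}, and invoke (\ref{c11}) and Proposition~\ref{pp1}. Your version merely spells out the constant bookkeeping (the computation of $a^{q}$ and $\kappa$) and the scaling argument for the Wasserstein bound, which the paper leaves implicit.
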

\begin{proof}
From Proposition \ref{pp2} and the relation between the fBm and the bi-fBm
(\ref{5m-1}), we obtain that
\begin{equation*}
\left ( u_{\theta } (t,x) + c_{2, \alpha } \theta ^{-\frac{1}{2\alpha
}} X _{t} \right ) \equiv ^{(d)} c_{2, \alpha }
\theta ^{-\frac{1}{2\alpha }} 2 ^{\frac{1}{2\alpha }} B ^{\frac{\alpha
-1}{2\alpha }},
\end{equation*}
where $ B ^{\frac{\alpha -1}{2\alpha }} $ is a fBm with the Hurst parameter
$\frac{\alpha -1}{2\alpha } \in (0, \frac{1}{2})$. Therefore,
$u_{\theta }$ is a perturbed fBm\index{perturbed fBm} and we obtain, by taking $H=\frac{
\alpha -1}{2\alpha }$ and $q=\frac{1}{H} = \frac{2\alpha }{\alpha -1},
$
\begin{align*}
&\frac{1}{\sqrt{n}} \sum _{i=0} ^{n-1} \left [\frac{ n
\theta ^{\frac{1}{\alpha -1}} }{ c_{2, \alpha } ^{\frac{2\alpha }{
\alpha -1}} 2^{\frac{1}{\alpha -1}} (A_{2}-A_{1}) }\left ( u_{\theta
}(t_{j+1}, x)-u_{\theta }(t_{j}, x)\right ) ^{\frac{2\alpha }{\alpha
-1} }- \theta ^{\frac{1}{1-\alpha } }\right ]
\\
&\to ^{(d)} N(0,
\sigma _{\frac{1}{q}, q} ^{2}).
\end{align*}
This means
\begin{equation*}
\sqrt{n} \mu _{\frac{2\alpha }{\alpha -1} } \theta ^{\frac{1}{\alpha
-1} } \left [ \widehat{\theta }_{n,1} ^{\frac{1}{1-\alpha } } -
\theta ^{\frac{1}{1-\alpha } }\right ] \to ^{(d)} N(0,
\sigma _{\frac{1}{q}, q} ^{2})
\end{equation*}
which is equivalent to (\ref{5m-2}).
\end{proof}

\vskip0.2cm

Using the so-called delta-method, we can get the asymptotic behavior of
the estimator $\widehat{\theta }_{n}$. Recall that if $(X_{n})_{n
\geq 1} $ is a sequence of random variables such that
\begin{equation*}
\sqrt{n} (X_{n}-\gamma _{0} ) \to ^{(d)} N (0, \sigma ^{2})
\end{equation*}
and $g$ is a function such that $g'(\gamma _{0} ) $ exists and does not
vanish, then
%
\begin{equation}
\label{22m-5}
\sqrt{n} ( g(X_{n}) - g(\gamma _{0} )) \to ^{(d)} N (0, \sigma ^{2}g'(
\gamma _{0} ) ^{2}).
\end{equation}

\begin{prop}
\label{pp5}
Consider the estimator (\ref{hatt}) and let $s_{1, \theta , \alpha } $
be given by (\ref{5m-2}). Then as $n\to \infty $,
%
\begin{equation}
\label{5m-3}
\sqrt{n} (\widehat{\theta } _{n,1} -\theta )\to N (0, s_{1, \theta
, \alpha } ^{2} (1-\alpha ) ^{2} \theta ^{\frac{2\alpha }{\alpha -1} }),
\end{equation}
and for $n$ large enough,
\begin{equation*}
d_{W} \left ( \sqrt{n} (\widehat{\theta } _{n,1} -\theta ), N(0, s
_{1, \theta , \alpha } ^{2} (1-\alpha ) ^{2}
\theta ^{\frac{2\alpha }{\alpha -1} }) \right ) \leq c\frac{1}{
\sqrt{n}}.
\end{equation*}
\end{prop}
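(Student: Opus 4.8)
The plan is to obtain Proposition \ref{pp5} from Proposition \ref{pp4} through the delta method (\ref{22m-5}) applied to the function $g(y)=y^{1-\alpha}$. Writing $X_n:=\widehat{\theta}_{n,1}^{\frac{1}{1-\alpha}}$ and $\gamma_0:=\theta^{\frac{1}{1-\alpha}}$, one has $g(X_n)=\widehat{\theta}_{n,1}$ and $g(\gamma_0)=\theta$, while Proposition \ref{pp4} furnishes $\sqrt{n}(X_n-\gamma_0)\to^{(d)}N(0,s_{1,\theta,\alpha}^2)$. Since $g'(y)=(1-\alpha)y^{-\alpha}$, we compute $g'(\gamma_0)^2=(1-\alpha)^2\gamma_0^{-2\alpha}=(1-\alpha)^2\theta^{\frac{2\alpha}{\alpha-1}}$, so that (\ref{22m-5}) delivers precisely the limit law $N(0,s_{1,\theta,\alpha}^2(1-\alpha)^2\theta^{\frac{2\alpha}{\alpha-1}})$ of (\ref{5m-3}). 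This settles the stated convergence in distribution.

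For the Berry--Ess\'{e}en bound I would quantify the delta method. Put $\sigma^2=s_{1,\theta,\alpha}^2$ and $V=\sigma^2 g'(\gamma_0)^2$, set $R_n=\sqrt{n}\big(g(X_n)-g(\gamma_0)-g'(\gamma_0)(X_n-\gamma_0)\big)$, and use the triangle inequality together with $d_W(F,G)\le\mathbf{E}|F-G|$ (valid once the variables are integrable) to write
\[
d_W\big(\sqrt{n}(\widehat{\theta}_{n,1}-\theta),\,N(0,V)\big)\le d_W\big(g'(\gamma_0)\sqrt{n}(X_n-\gamma_0),\,N(0,V)\big)+\mathbf{E}|R_n|.
\]
The first term is handled by the scaling identity $d_W(cF,cG)=|c|\,d_W(F,G)$: since $N(0,V)$ is the law of $g'(\gamma_0)Z$ with $Z\sim N(0,\sigma^2)$, it equals $|g'(\gamma_0)|\,d_W(\sqrt{n}(X_n-\gamma_0),N(0,\sigma^2))\le c\,n^{-1/2}$ by Proposition \ref{pp4}. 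Everything then reduces to showing $\mathbf{E}|R_n|\le c\,n^{-1/2}$, that is, that the Taylor remainder $\mathbf{E}\big|g(X_n)-g(\gamma_0)-g'(\gamma_0)(X_n-\gamma_0)\big|$ is of order $n^{-1}$.

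This last estimate is the main obstacle, for two related reasons: $g''(y)=-\alpha(1-\alpha)y^{-\alpha-1}$ blows up at $y=0$, and $\widehat{\theta}_{n,1}=X_n^{-(\alpha-1)}$ is not even integrable unless the increments are non-degenerate. The key point --- and the reason the bound is only claimed ``for $n$ large enough'' --- is that for $n$ large the Gaussian vector $\big(u_\theta(t_{i+1},x)-u_\theta(t_i,x)\big)_{0\le i\le n-1}$ is non-degenerate, so $X_n$ has finite negative moments $\mathbf{E}[X_n^{-m}]$ up to order $m\sim n/q$ and $R_n$ is integrable. I would then split on $\Omega_n=\{|X_n-\gamma_0|\le \gamma_0/2\}$. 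On $\Omega_n$ the argument of $g$ stays in $[\gamma_0/2,3\gamma_0/2]$, where $g''$ is bounded, so $|R_n|\le \tfrac{\sqrt{n}}{2}\|g''\|_{\infty}(X_n-\gamma_0)^2$ and hence $\mathbf{E}[|R_n|\mathbf{1}_{\Omega_n}]\le c\sqrt{n}\,\mathbf{E}(X_n-\gamma_0)^2=O(n^{-1/2})$; here $\mathbf{E}(X_n-\gamma_0)^2=O(n^{-1})$ follows because $q=\frac{2\alpha}{\alpha-1}$ is an even integer, so $X_n$ lies in a fixed finite sum of Wiener chaoses and hypercontractivity bounds its variance, while the bias $\mathbf{E}(X_n-\gamma_0)$ is $O(n^{-1})$ by testing the bound of Proposition \ref{pp4} against $h(x)=x\in\mathcal{A}$. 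On the complement, the same finite-chaos moment bounds give $P(\Omega_n^c)=O(n^{-p})$ for every $p\ge1$, and a Cauchy--Schwarz estimate combining this super-polynomial decay with the negative moments of $X_n$ renders $\mathbf{E}[|R_n|\mathbf{1}_{\Omega_n^c}]$ negligible against $n^{-1/2}$. Collecting the two contributions gives $\mathbf{E}|R_n|\le c\,n^{-1/2}$ and therefore the announced Wasserstein bound.
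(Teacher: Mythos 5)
Your proposal is correct and follows the same skeleton as the paper's proof: the convergence (\ref{5m-3}) is obtained exactly as in the paper, by the delta method (\ref{22m-5}) with $g(x)=x^{1-\alpha}$, and the Wasserstein bound is reduced, via the triangle inequality and $d_W(F,G)\le \mathbf{E}|F-G|$, to the bound of Proposition \ref{pp4} for the linearized term plus an $L^{1}$ estimate of a remainder. Where you diverge is in how that remainder is controlled. The paper writes $\sqrt{n}(g(X_n)-g(\gamma_0))=g'(\gamma_0)\sqrt{n}(X_n-\gamma_0)+T_n$ with $T_n=\sqrt{n}(X_n-\gamma_0)\bigl(g'(\widetilde{\gamma_0})-g'(\gamma_0)\bigr)$ for a random intermediate point $\widetilde{\gamma_0}$, and disposes of $\mathbf{E}|T_n|$ in three lines by Cauchy--Schwarz, bounding $\mathbf{E}\bigl(g'(\widetilde{\gamma_0})-g'(\gamma_0)\bigr)^2$ through the $L^2$-rate of $\widehat{\theta}_{n,1}^{1/(\alpha-1)}$ asserted to follow from (\ref{5m-2}); this tacitly assumes the integrability of the negative powers of $X_n$ that appear in $g'(\widetilde{\gamma_0})$ and a Lipschitz-type control of $g'$ near $\gamma_0$. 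Your second-order Taylor expansion with the localization event $\Omega_n=\{|X_n-\gamma_0|\le\gamma_0/2\}$, the hypercontractivity/finite-chaos bound $\mathbf{E}(X_n-\gamma_0)^2=O(n^{-1})$, and the small-ball argument for the negative moments of $X_n$ on $\Omega_n^c$ address these points explicitly, so your write-up is longer but fills in precisely the steps the paper leaves implicit; the two arguments buy the same conclusion, yours at the price of more bookkeeping, the paper's at the price of unstated integrability assumptions.
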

\begin{proof}
By applying the delta-method for the function $g(x)= x ^{1-\alpha } $, $X_{n} =\widehat{\theta }_{n,1} ^{\frac{1}{1-\alpha }}$ and $\gamma
_{0} =\theta ^{\frac{1}{1-\alpha }}$, we immediately obtain the
convergence (\ref{5m-3}). Concerning the rate of convergence, we can
write, with $\widetilde{\gamma _{0}} $ a random point located between
$X_{n}$ and $\gamma _{0}$,
\begin{eqnarray*}
\sqrt{n} (g(X_{n})- g(\gamma _{0}))
&=& \sqrt{n} g'(
\widetilde{\gamma _{0}} ) ( X_{n} - \gamma _{0} )
\\
&=& g'(\gamma _{0}) \sqrt{n} ( X_{n} - \gamma _{0} ) + \sqrt{n} ( X
_{n} - \gamma _{0} )( g'(\widetilde{\gamma _{0}} ) - g'(\gamma _{0}) )
\\
&=:&g'(\gamma _{0}) \sqrt{n} ( X_{n} - \gamma _{0} ) + T_{n}.
\end{eqnarray*}
We have, for $n$ large,
\begin{eqnarray*}
\mathbf{E} \vert T_{n}\vert
&=& \mathbf{E} \left | \sqrt{n} ( X_{n}
- \gamma _{0} )( g'(\widetilde{\gamma _{0}} ) - g'(\gamma _{0}) )\right |
\\
&\leq & \left ( \mathbf{E} \left ( \sqrt{n} ( X_{n} - \gamma _{0}
)\right ) ^{2} \right ) ^{\frac{1}{2}} \left ( \mathbf{E} ( g'(
\widetilde{\gamma _{0}} ) - g'(\gamma _{0}) )^{2} \right ) ^{
\frac{1}{2}}
\\
&\leq & c \left ( \mathbf{E} ( g'(\widetilde{\gamma _{0}} ) - g'(
\gamma _{0}) )^{2} \right ) ^{\frac{1}{2}} \leq c \left ( \mathbf{E}
\left ( \widehat{\theta } _{n,1} ^{\frac{\alpha }{\alpha -1}} -
\theta ^{\frac{\alpha }{\alpha -1}}\right ) ^{2} \right ) ^{
\frac{1}{2}}
\\
&\leq & c \left ( \mathbf{E} \left ( \widehat{\theta } _{n,1} ^{\frac{1}{
\alpha -1}} - \theta ^{\frac{1}{\alpha -1}}\right ) ^{2} \right ) ^{
\frac{1}{2}} \leq c \frac{1}{\sqrt{n}}
\end{eqnarray*}
where we used the assumption $\alpha >1$ for the first inequality of the
line above and relation (\ref{5m-2}) (which gives in particular the
$L^{2}(\Omega )$-convergence of $ \widehat{\theta } _{n,1} ^{\frac{1}{
\alpha -1}}$ to $\theta ^{\frac{\alpha }{\alpha -1}}$ as $n\to \infty
$) for the second inequality on the same line. Therefore, by the
triangle inequality and Proposition \ref{pp4}, for $n$ large enough,
\begin{align*}
&d_{W} \left ( \sqrt{n} (\widehat{\theta } _{n,1} -\theta ), N(0, s
_{1, \theta , \alpha } ^{2} (1-\alpha ) ^{2}
\theta ^{\frac{\alpha }{\alpha -1} }) \right )
\\
& \leq cd_{W} \left ( \sqrt{n}(X_{n}-\gamma _{0}), N(0, s_{1,
\theta , \alpha } ^{2}) \right )+ \mathbf{E} \vert T_{n}\vert \leq c\frac{1}{
\sqrt{n}}.\qedhere
\end{align*}
\end{proof}

\subsubsection{Estimators based on the spatial variation}%
\label{sec2.3.3}
It is possible to define an estimator for the parameter $\theta $ based
on the spatial variations of the solution (\ref{sol1}). The result in
Proposition \ref{pp3} says that the process $\left ( u_{\theta }(t,x),
x\in \mathbb{R}\right )$ is a perturbed fBm,\index{perturbed fBm} so we know its exact
variation in space. Below $x_{j}= A_{1}+ \frac{j}{n} (A_{2}-A_{1})$, $j=0,\ldots, n $, will denote a partition of the interval $[A_{1}, A_{2}]$.

\begin{prop}
\label{ppp3}
Let $u_{\theta }$ be given by (\ref{1}). Then
\begin{equation*}
\sum _{i=0} ^{n-1} \left | u_{\theta } (t,x_{j+1})-u_{\theta } (t,x
_{j})\right | ^{\frac{2}{\alpha -1} } \to _{n\to \infty } m_{ \alpha }
^{\frac{2}{\alpha -1}}\mu _{\frac{2}{\alpha -1}} (A_{2}-A_{1}) \vert
\theta \vert ^{\frac{-1}{\alpha -1}}
\end{equation*}
and if $q:=\frac{2}{\alpha -1}$ is an integer,
\begin{eqnarray*}
&&\frac{1}{\sqrt{n}}\sum _{i=0} ^{n-1} \left [ \left ( \frac{ n}{m
_{\alpha }^{\frac{2}{\alpha -1}}(A_{2}-A_{1})} \right )
\theta ^{\frac{1}{\alpha -1}}\left ( u_{\theta } ( t,x_{i+1}) -u_{
\theta } ( t,x_{i}) \right ) ^{\frac{2}{\alpha -1}} -
\mu _{\frac{2}{\alpha -1}} \right ]
\\
&&\to ^{(d)} N(0, \sigma ^{2} _{\frac{\alpha -1}{2},
\frac{2}{\alpha -1}} ).
\end{eqnarray*}

\end{prop}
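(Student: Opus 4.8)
The plan is to read this proposition as a direct specialization of Lemma~\ref{l1} to the \emph{spatial} process, using the distributional representation supplied by Proposition~\ref{pp3}. Both assertions — convergence in probability of the exact $q$-variation and convergence in distribution of the normalized, centered sum — are determined entirely by the finite dimensional distributions of the increments of $(u_\theta(t,x),\, x\in\mathbb{R})$, so the equality $\equiv^{(d)}$ of Proposition~\ref{pp3} is enough to transport the conclusions of Lemma~\ref{l1} from the model process $Y^H$ to $u_\theta(t,\cdot)$. Concretely, I would set $Y^H_x = aB^{H}(x)+X_x$ with the identification
\[
H=\frac{\alpha-1}{2},\qquad a=\theta^{-1/2}m_\alpha,\qquad X_x=S_{\theta t}(x),
\]
so that $\frac1H=\frac{2}{\alpha-1}=q$ is exactly the order of the variation under consideration.

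First I would verify the hypotheses of Lemma~\ref{l1}. Since $\alpha\in(1,2]$ we have $H=\frac{\alpha-1}{2}\in(0,\frac12]$, as required. The perturbation $X_x=S_{\theta t}(x)$ is the centered Gaussian process with $C^\infty$ sample paths produced by Proposition~\ref{pp3}; its covariance is smooth, so a Taylor expansion of $\mathbf{E}[S_{\theta t}(x)S_{\theta t}(y)]$ about the diagonal yields the mean-square Lipschitz bound $\mathbf{E}|S_{\theta t}(x)-S_{\theta t}(y)|^2\le C|x-y|^2$, i.e.\ condition~(\ref{23a-1}). With these checks in hand, part~(1) of Lemma~\ref{l1} delivers at once the exact $\frac1H$-variation of $Y^H$; substituting $a=\theta^{-1/2}m_\alpha$ and $\frac1H=\frac{2}{\alpha-1}$ and using $\theta>0$ reproduces the constant $m_\alpha^{2/(\alpha-1)}\mu_{2/(\alpha-1)}(A_2-A_1)|\theta|^{-1/(\alpha-1)}$, which is precisely the claimed limit.

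For the central limit theorem I would invoke part~(2) of Lemma~\ref{l1}. The only bookkeeping is to match the normalization in~(\ref{5a-1}): since $Hq=1$, the factor $\frac{n^{Hq}}{(A_2-A_1)^{qH}a^q}$ collapses to $\frac{n}{(A_2-A_1)}a^{-q}$, and $a^{-q}=(\theta^{-1/2}m_\alpha)^{-2/(\alpha-1)}=\theta^{1/(\alpha-1)}m_\alpha^{-2/(\alpha-1)}$, which is exactly the prefactor $\frac{n}{m_\alpha^{2/(\alpha-1)}(A_2-A_1)}\theta^{1/(\alpha-1)}$ in the statement; the limiting variance is then $\sigma^2_{H,q}=\sigma^2_{\frac{\alpha-1}{2},\frac{2}{\alpha-1}}$. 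For $\alpha\in(1,2)$ one has $H<\frac12$ and the integer $q=\frac{2}{\alpha-1}\ge2$, so formula~(\ref{c11}) applies verbatim and concludes the argument. The main obstacle is the endpoint $\alpha=2$, where $H=\frac12$ and $q=2$: there~(\ref{c11}) is no longer available and one must instead use~(\ref{c12}), which requires $X=S_{\theta t}$ to be adapted to the spatial filtration generated by $B^{1/2}$. Securing (or separately justifying) this adaptedness in the critical case is the delicate point, and should it fail one would restrict the CLT to $\alpha\in(1,2)$.
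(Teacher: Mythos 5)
Your proof is correct and takes essentially the same route as the paper: Proposition~\ref{ppp3} is stated there without an explicit proof, but the proof of the proposition that follows it combines Proposition~\ref{pp3} with Lemma~\ref{l1} in exactly the way you do, and your bookkeeping of the constants (including the identification $a=\theta^{-1/2}m_{\alpha}$, $Hq=1$) matches the stated limits. Your remark about the endpoint $\alpha=2$, where $H=\frac{1}{2}$ forces one to invoke (\ref{c12}) with its adaptedness hypothesis instead of (\ref{c11}), is a genuine point of care that the paper itself glosses over.
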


Proposition \ref{ppp3} leads to the definition of the
estimator
%
\begin{equation}
\label{hatt2}
\widehat{\theta }_{n, 2}=
\left [(m_{ \alpha } ^{\frac{2}{\alpha -1}}
\mu _{\frac{2}{\alpha -1}} (A_{2}-A_{1}))^{-1} \sum _{i=0} ^{n-1} \left |
u_{\theta } (t,x_{j+1})-u_{\theta } (t,x_{j})\right | ^{\frac{2}{
\alpha -1} } \right ]^{1-\alpha },
\end{equation}
and we can immediately deduce from Proposition \ref{pp3} its asymptotic
proprieties.

\begin{prop}
The estimator (\ref{hatt2}) converges in probability as $n\to \infty
$ to the parameter $\theta $. Moreover, if $q:=\frac{2}{\alpha -1}$ is
an even integer,
%
\begin{equation}
\label{23m-1}
\sqrt{n} \left [ \widehat{\theta }_{n,2} ^{\frac{1}{1-\alpha } } -
\theta ^{\frac{1}{1-\alpha }} \right ] \to ^{(d)} N(0, s_{2, \theta ,
\alpha } ^{2}) \mbox{ with } s_{2, \theta , \alpha } ^{2} = \sigma
^{2} _{\frac{\alpha -1}{2}, \frac{2}{\alpha -1}}
\mu _{\frac{2}{\alpha -1} } ^{-2} \theta ^{\frac{2}{1-\alpha } }.
\end{equation}
Moreover, for $n$ large,
\begin{equation*}
d_{W} \left ( \sqrt{n} \left [ \widehat{\theta }_{n,2} ^{\frac{1}{1-
\alpha } } -\theta ^{\frac{1}{1-\alpha }} \right ] , N(0, s_{2, \theta
, \alpha } ^{2}) \right ) \leq c\frac{1}{\sqrt{n}}.
\end{equation*}
\end{prop}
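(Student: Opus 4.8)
The plan is to follow the scheme of the proof of Proposition~\ref{pp4} for the temporal estimator, replacing the law of $u_\theta$ in time (Proposition~\ref{pp2}) by its law in space (Proposition~\ref{pp3}). Throughout I set $q=\frac{2}{\alpha-1}$ and $H=\frac{\alpha-1}{2}=\frac1q\in(0,\frac12]$, and I fix $t$. By Proposition~\ref{pp3}, the spatial process $(u_\theta(t,x),x\in\mathbb{R})$ has the law of the perturbed fBm $Y^H_x:=\theta^{-1/2}m_\alpha B^H(x)+S_{\theta t}(x)$, where $B^H$ is a fBm of Hurst index $H=\frac{\alpha-1}{2}$ and $S_{\theta t}$ has $C^\infty$ paths; the latter is what guarantees, in the spatial variable, the quadratic-increment bound~(\ref{23a-1}) required to invoke Lemma~\ref{l1} and Proposition~\ref{pp1}.

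For the consistency I would rewrite the estimator as $\widehat{\theta}_{n,2}^{\frac1{1-\alpha}}=\big(m_\alpha^{q}\mu_q(A_2-A_1)\big)^{-1}\sum_{i=0}^{n-1}|u_\theta(t,x_{i+1})-u_\theta(t,x_i)|^{q}$ and apply the convergence-in-probability assertion of Proposition~\ref{ppp3}, which yields $\widehat{\theta}_{n,2}^{\frac1{1-\alpha}}\to\theta^{\frac1{1-\alpha}}$ in probability. Since $y\mapsto y^{1-\alpha}$ is continuous on $(0,\infty)$, the continuous mapping theorem gives $\widehat{\theta}_{n,2}\to\theta$ in probability.

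For the central limit theorem I start from the CLT in Proposition~\ref{ppp3}. The key observation is that its normalized sum is exactly $\frac1{\sqrt n}V_{q,n}(Y^H)$ for the spatial $Y^H$ above with $a=\theta^{-1/2}m_\alpha$ (a direct check matching the normalizing constant $\frac{n\,\theta^{\frac1{\alpha-1}}}{m_\alpha^{q}(A_2-A_1)}$). Substituting the identity $\sum_{i}|u_\theta(t,x_{i+1})-u_\theta(t,x_i)|^{q}=m_\alpha^{q}\mu_q(A_2-A_1)\widehat{\theta}_{n,2}^{\frac1{1-\alpha}}$ (legitimate because $q$ is even, so the absolute value drops) and collecting constants, this sum equals $\sqrt n\,\mu_q\theta^{\frac1{\alpha-1}}\big[\widehat{\theta}_{n,2}^{\frac1{1-\alpha}}-\theta^{\frac1{1-\alpha}}\big]$. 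Dividing by $\mu_q\theta^{\frac1{\alpha-1}}$ then converts the convergence to $N(0,\sigma^2_{\frac{\alpha-1}{2},\frac{2}{\alpha-1}})$ into~(\ref{23m-1}) with $s_{2,\theta,\alpha}^2=\sigma^2_{\frac{\alpha-1}{2},\frac{2}{\alpha-1}}\mu_q^{-2}\theta^{\frac2{1-\alpha}}$.

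For the Berry--Ess\'{e}en bound I would use the preceding identity $\sqrt n\big[\widehat{\theta}_{n,2}^{\frac1{1-\alpha}}-\theta^{\frac1{1-\alpha}}\big]=(\mu_q\theta^{\frac1{\alpha-1}})^{-1}\frac1{\sqrt n}V_{q,n}(Y^H)$ together with the homogeneity $d_W(\lambda F,\lambda G)=|\lambda|\,d_W(F,G)$ of the Wasserstein distance under scaling. Since $H=\frac{\alpha-1}{2}\le\frac12$, Proposition~\ref{pp1} gives $d_W\big(\frac1{\sqrt n}V_{q,n}(Y^H),N(0,\sigma^2_{H,q})\big)\le c/\sqrt n$, and pulling out the constant $(\mu_q\theta^{\frac1{\alpha-1}})^{-1}$ yields the claimed $O(n^{-1/2})$ rate. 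The points demanding care are the algebraic bookkeeping of the normalizing constants, the verification of~(\ref{23a-1}) for $(S_{\theta t}(x))_x$ from the smoothness of its covariance, and the boundary case $\alpha=2$ (where $H=\frac12$, $q=2$, and Lemma~\ref{l1}/Proposition~\ref{pp1} additionally require adaptedness of the smooth part; this case is covered in~\cite{PoTr}), whereas for $\alpha\in(1,2)$ one has $H<\frac12$ and the results apply directly.
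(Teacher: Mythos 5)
Your proposal is correct and follows essentially the same route as the paper: the paper likewise invokes Proposition \ref{pp3} to view $\theta^{1/2}m_\alpha^{-1}u_\theta(t,\cdot)$ as a perturbed fBm with Hurst index $\frac{\alpha-1}{2}$, identifies the normalized $q$-variation sum with $\sqrt{n}\,\mu_{\frac{2}{\alpha-1}}\theta^{\frac{1}{\alpha-1}}\bigl[\widehat{\theta}_{n,2}^{\frac{1}{1-\alpha}}-\theta^{\frac{1}{1-\alpha}}\bigr]$ via Lemma \ref{l1}, and then applies Proposition \ref{pp1} together with the scaling of $d_W$ to get the $O(n^{-1/2})$ bound. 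Your added remarks on verifying (\ref{23a-1}) for $S_{\theta t}$ and on the boundary case $\alpha=2$ (where $H=\frac12$, $q=2$ and adaptedness is needed) are points the paper leaves implicit, but they do not change the argument.
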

\begin{proof}
Using the law of the process $\left ( u_{\theta } (t, x), x\in
\mathbb{R}\right ) $ obtained in Proposition \ref{pp3}, we deduce that
the Gaussian process\index{Gaussian process} $\left ( \theta ^{\frac{1}{2}} m_{\alpha } ^{-1} u
_{\theta }(t, x), x\in \mathbb{R} \right )$ is a perturbed fractional
Brownian motion.\index{fractional ! Brownian motion} Therefore, by relation (\ref{c11}) in Lemma \ref{l1},
\begin{eqnarray*}
&&\frac{1}{\sqrt{n}} \sum _{i=0} ^{n-1} \left ( \frac{n
\theta ^{\frac{1}{\alpha -1}}}{(A_{2}-A_{1}) m_{\alpha } ^{\frac{2}{
\alpha -1} } }\left ( u_{\theta } (t,x_{j+1})-u_{\theta }
(t,x_{j})\right ) ^{\frac{2}{\alpha -1} }
-\mu _{\frac{2}{\alpha -1}}\right )
\\
&=&\sqrt{n} \mu _{\frac{2}{\alpha -1}} \theta ^{\frac{1}{\alpha -1}}
\left [ \widehat{\theta }_{n,2} ^{\frac{1}{1-\alpha } } -
\theta ^{\frac{1}{1-\alpha }} \right ] \to ^{(d)}
_{n\to \infty }N\left (0, \sigma ^{2} _{\frac{\alpha -1}{2}, \frac{2}{
\alpha -1}}\right ).
\end{eqnarray*}
Moreover, Proposition \ref{pp1} implies that
\begin{equation*}
d_{W} \left ( \sqrt{n} \mu _{\frac{2}{\alpha -1}}
\theta ^{\frac{1}{\alpha -1}} \left [ \widehat{\theta }_{n,2} ^{\frac{1}{1-
\alpha } } -\theta ^{\frac{1}{1-\alpha }} \right ] , N(0, \sigma ^{2}
_{\frac{\alpha -1}{2}, \frac{2}{\alpha -1}} )\right ) \leq c\frac{1}{
\sqrt{n}}
\end{equation*}
and this obviously leads to the desired conclusion.
\end{proof}

By using the delta-method, we can obtain the asymptotic distribution of
$\widehat{\theta }_{n,2}$.

\begin{prop}
Let $\widehat{\theta }_{n,2}$ be given by (\ref{hatt2}). Then, with
$s_{2, \theta , \alpha }$ from (\ref{23m-1}), as $n\to \infty $,
\begin{equation*}
\sqrt{n} (\widehat{\theta } _{n,2} -\theta )\to ^{(d)} N \left (0, s
_{2, \theta , \alpha } ^{2} (1-\alpha ) ^{2}
\theta ^{\frac{2\alpha }{\alpha -1} }\right ),
\end{equation*}
and for $n$ large enough,
\begin{equation*}
d_{W} \left ( \sqrt{n} (\widehat{\theta } _{n,2} -\theta ), N(0, s
_{2, \theta , \alpha } ^{2} (1-\alpha ) ^{2}
\theta ^{\frac{2\alpha }{\alpha -1} }) \right ) \leq c\frac{1}{
\sqrt{n}}.
\end{equation*}
\end{prop}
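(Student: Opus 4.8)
The plan is to mimic the proof of Proposition~\ref{pp5}, applying the delta-method~(\ref{22m-5}) to the transformed estimator whose asymptotics were established in the preceding proposition. Concretely, I would take $g(x)=x^{1-\alpha}$, $X_{n}=\widehat{\theta}_{n,2}^{\frac{1}{1-\alpha}}$ and $\gamma_{0}=\theta^{\frac{1}{1-\alpha}}$, so that $g(X_{n})=\widehat{\theta}_{n,2}$ and $g(\gamma_{0})=\theta$. The preceding proposition already supplies the CLT $\sqrt{n}(X_{n}-\gamma_{0})\to^{(d)}N(0,s_{2,\theta,\alpha}^{2})$ together with the Wasserstein rate $d_{W}(\sqrt{n}(X_{n}-\gamma_{0}),N(0,s_{2,\theta,\alpha}^{2}))\leq c/\sqrt{n}$ from~(\ref{23m-1}), so the distributional convergence in the statement follows at once from~(\ref{22m-5}) once $g'(\gamma_{0})$ is identified.

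For the limiting variance, differentiating gives $g'(x)=(1-\alpha)x^{-\alpha}$, hence $g'(\gamma_{0})=(1-\alpha)\theta^{-\frac{\alpha}{1-\alpha}}=(1-\alpha)\theta^{\frac{\alpha}{\alpha-1}}$ and $g'(\gamma_{0})^{2}=(1-\alpha)^{2}\theta^{\frac{2\alpha}{\alpha-1}}$. Substituting this into~(\ref{22m-5}) reproduces exactly the announced variance $s_{2,\theta,\alpha}^{2}(1-\alpha)^{2}\theta^{\frac{2\alpha}{\alpha-1}}$, which disposes of the first assertion.

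To obtain the Berry--Ess\'{e}en bound I would repeat the Taylor-expansion argument of Proposition~\ref{pp5} verbatim. Using the mean value theorem with a random point $\widetilde{\gamma_{0}}$ located between $X_{n}$ and $\gamma_{0}$, I would write
\begin{equation*}
\sqrt{n}(g(X_{n})-g(\gamma_{0}))=g'(\gamma_{0})\sqrt{n}(X_{n}-\gamma_{0})+T_{n},\qquad T_{n}:=\sqrt{n}(X_{n}-\gamma_{0})\bigl(g'(\widetilde{\gamma_{0}})-g'(\gamma_{0})\bigr),
\end{equation*}
and then bound $\mathbf{E}\vert T_{n}\vert$ by the Cauchy--Schwarz inequality. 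Since $\mathbf{E}(\sqrt{n}(X_{n}-\gamma_{0}))^{2}$ stays bounded and, using $\alpha>1$, $\mathbf{E}(g'(\widetilde{\gamma_{0}})-g'(\gamma_{0}))^{2}\leq c\,\mathbf{E}(\widehat{\theta}_{n,2}^{\frac{\alpha}{\alpha-1}}-\theta^{\frac{\alpha}{\alpha-1}})^{2}\leq c\,\mathbf{E}(\widehat{\theta}_{n,2}^{\frac{1}{\alpha-1}}-\theta^{\frac{1}{\alpha-1}})^{2}\leq c/n$, this yields $\mathbf{E}\vert T_{n}\vert\leq c/\sqrt{n}$. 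The triangle inequality for $d_{W}$ then combines the rate from the preceding proposition with this bound on $T_{n}$ to give the claimed $c/\sqrt{n}$ estimate.

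The only genuinely delicate point is the $L^{2}(\Omega)$ control underlying $\mathbf{E}\vert T_{n}\vert\leq c/\sqrt{n}$: it requires the $L^{2}$-convergence of $\widehat{\theta}_{n,2}^{\frac{1}{\alpha-1}}$ to $\theta^{\frac{1}{\alpha-1}}$ at rate $n^{-1/2}$, not merely convergence in law. As in Proposition~\ref{pp5}, this is admissible because the normalized sums involved live in a fixed Wiener chaos (the Hermite expansion of fixed order $q=\frac{2}{\alpha-1}$), where all $L^{p}$ norms are equivalent and the $L^{2}$ rate follows from the same chaos estimates behind Proposition~\ref{pp1}. Making this moment control precise is where I expect the main work to lie; everything else is purely algebraic bookkeeping.
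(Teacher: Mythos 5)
Your proposal is correct and follows exactly the paper's route: the paper's own proof consists of the single remark that one applies the delta-method (\ref{22m-5}) with $g(x)=x^{1-\alpha}$ and $\gamma_{0}=\theta^{\frac{1}{1-\alpha}}$ and then repeats the argument of Proposition~\ref{pp5}, which is precisely what you do, including the correct variance factor $g'(\gamma_{0})^{2}=(1-\alpha)^{2}\theta^{\frac{2\alpha}{\alpha-1}}$ and the Taylor/Cauchy--Schwarz control of the remainder $T_{n}$. Your closing observation that the $L^{2}$ rate behind $\mathbf{E}\vert T_{n}\vert\leq c/\sqrt{n}$ rests on fixed-chaos moment equivalences is a fair (and slightly more careful) gloss on a step the paper itself passes over quickly in Proposition~\ref{pp5}.
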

\begin{proof}
It suffices to apply (\ref{22m-5}) to the function $g(x)= x ^{1-
\alpha }$ and $\gamma _{0}= \theta ^{\frac{1}{1-\alpha }}$ and to follow
the proof of Proposition \ref{pp5}.
\end{proof}

\begin{remark}\mbox{}

\begin{itemize}
\item
The estimators (\ref{hatt}) and (\ref{hatt2}) coincide with the
estimators in \cite{PoTr} in the case of the standard Laplacian\index{standard Laplacian}
$\alpha =2$.
\item
The distance of the estimators (\ref{hatt}) and (\ref{hatt2}) to their
limit distribution is of the same order, although they involve
$q$-variations with different $q$.
\end{itemize}
\end{remark}

\section{Heat equation\index{heat equation} with the fractional Laplacian\index{fractional ! Laplacian} and a white-colored noise}%
\label{sec3}
In this section, we will consider the stochastic heat equation\index{stochastic heat equation} with an
additive Gaussian noise which behaves as a Wiener process in time and
as a fractional Brownian motion\index{fractional ! Brownian motion} in space, i.e. its spatial covariance
is given by the so-called Riesz kernel.\index{Riesz kernel} We will again study the
distribution of the solution, its connection with the fractional and
bifractional Brownian motion\index{bifractional Brownian motion} and we apply the $q$-variation method to
obtain an asymptotically normal estimator for the drift parameter.

\subsection{General properties of the solution}%
\label{sec3.1}
We will consider the stochastic heat equation\index{stochastic heat equation}
%
\begin{equation}
\label{jada}
\frac{\partial }{\partial t} u_{\theta }(t, \mathbf{x}) = -\theta (-
\Delta ) ^{\frac{\alpha }{2}}u_{\theta }(t,\mathbf{x}) + \dot{W} ^{
\gamma } (t,\mathbf{x}),
\hskip0.5cm t\geq 0, \mathbf{x}\in \mathbb{R} ^{d}\xch{,}{.}
\end{equation}
with $u_{\theta }(0,\mathbf{x})=0$ for every $\mathbf{x}\in
\mathbb{R} ^{d}$. In (\ref{jada}), $ -(-\Delta ) ^{\frac{\alpha }{2}}$
denotes the fractional Laplacian\index{fractional ! Laplacian} with exponent $\frac{\alpha }{2}$,
$\alpha \in (1,2]$, and $W^{\gamma }$ is the so-called white-colored
noise, i.e. $W^{\gamma } (t,A), t\geq 0$, $A \in \mathcal{B}(\mathbb{R}
^{d}) $, is a centered Gaussian field with covariance
%
\begin{equation}
\label{covwc}
\mathbf{E} W^{\gamma } (t,A)W ^{\gamma } (s,B)=( t\wedge s)\int _{A}
\int _{B} f(\mathbf{x}-\mathbf{y}) d\mathbf{x} d\mathbf{y},
\end{equation}
where $f$ is the so-called Riesz kernel\index{Riesz kernel} of order $\gamma $ given by
%
\begin{equation}
\label{riesz}
f(\mathbf{x})=R_{\gamma }(\mathbf{x}):=g_{\gamma ,d}\Vert {\mathbf{x}}
\Vert ^{-d+\gamma }, \quad 0<\gamma <d,
\end{equation}
where $g_{\gamma ,d}=2^{d-\gamma } \pi ^{d/2} \Gamma ((d-\gamma )/2)/
\Gamma (\gamma /2)$. As usual, the mild solution\index{mild solution} to (\ref{jada}) is
given by
%
\begin{equation}
\label{sol2}
u_{\theta }(t, \mathbf{x})= \int _{0} ^{t} \int _{ \mathbb{R} ^{d}}G
_{\alpha } (\theta (t-s), \mathbf{x}-\mathbf{z}) W ^{\gamma } (ds, d
\mathbf{z}),
\end{equation}
where the above integral $W^{\gamma } (ds, d\mathbf{z})$ is a Wiener
integral\index{Wiener integral} with respect to the Gaussian noise\index{Gaussian noise} $W^{\gamma } $.

We know the following facts concerning the mild solution\index{mild solution} (\ref{sol2})
when $\theta =1$.
\begin{itemize}%
\item
The mild solution\index{mild solution} (\ref{jada}) is well-defined as a square integrable
process satisfying
\begin{equation*}
\sup _{t \in [0, T], \mathbf{x}\in \mathbb{R} ^{d} } \mathbf{E} \vert
u_{1}(t, \mathbf{x}) \vert ^{2} <\infty
\end{equation*}
if and only if
%
\begin{equation}
\label{22m-1}
d< \gamma + \alpha .
\end{equation}
In particular, condition (\ref{22m-1}) shows that the solution exists
in any spatial dimension $d$, via suitable choice of the parameter
$\gamma $.
\item
Assume (\ref{22m-1}) is satisfied. Then for every $\mathbf{x}\in
\mathbb{R}^{d}$, we have the following equivalence in distribution
%
\begin{equation}
\label{22m2}
\left ( u_{1}(t,\mathbf{x}), t\geq 0\right )\equiv ^{(d)} \left ( c
_{2, \alpha ,\gamma } B_{t} ^{\frac{1}{2}, 1-\frac{d-\gamma }{\alpha
}}, t\geq 0 \right ) ,
\end{equation}
where $B ^{\frac{1}{2}, 1-\frac{d-\gamma }{\alpha }}$ is a bifractional
Brownian motion\index{bifractional Brownian motion} with the Hurst parameters $H=\frac{1}{2}$\index{Hurst parameters} and $K=1-\frac{d-
\gamma }{\alpha }$ and
%
\begin{equation}
\label{ze2}
c_{2, \alpha ,\gamma } ^{2}= c_{1, \alpha ,\gamma } 2 ^{1-\frac{d-
\gamma }{\alpha }}
\end{equation}
with
\begin{equation*}
c_{1, \alpha , \gamma }= (2\pi ) ^{-d} \int _{\mathbb{R } ^{d}} d
\xi \Vert \xi \Vert ^{-\gamma } e ^{- \Vert \xi \Vert ^{\alpha }
}\frac{1}{2 (1-\frac{d-\gamma }{\alpha })}.
\end{equation*}
\item
For every $t\geq 0$, we have (see Proposition 4.6 in
\cite{MahTu})
%
\begin{equation}
\label{2k-1}
\left ( u(t,\mathbf{x}), \mathbf{x}\in \mathbb{R}^{d}\right ) \equiv
^{(d)} \left ( m_{\alpha ,\gamma } B ^{ \frac{\alpha +\gamma -d}{2}} (
\mathbf{x}) + S_{t}(\mathbf{x}), \mathbf{x}\in \mathbb{R}^{d} \right ),
\end{equation}
where $B^{\frac{\alpha +\gamma -d}{2}}$ is an isotropic $d$-dimensional
fractional Brownian motion\index{fractional ! Brownian motion} (see the next section) with the Hurst parameter
$\frac{\alpha +\gamma -d}{2}\ $, $(S_{t}(\mathbf{x}))_{\mathbf{x}
\in \mathbb{R}^{d}} $ is a centered Gaussian process with $C^{\infty
} $ sample paths and $m_{\alpha , \gamma }^{2}$ is an explicit numerical
constant.

\end{itemize}

\subsection{Perturbed isotropic fractional Brownian motion\index{fractional ! Brownian motion}}%
\label{sec3.2}
Since the law of the solution (\ref{sol2}) is related to the isotropic
fBm,\index{isotropic fBm} let us recall the definition of this process. The isotropic
$d$-parameter fBm (also known as the L\'{e}vy fBm) $(B_{d}^{ H}(
\mathbf{x}),\mathbf{x} \in \mathbb{R} ^{d}) $ with the Hurst parameter
$H\in (0,1)$ is defined as a centered Gaussian process, starting from
zero, with covariance function
%
\begin{equation}
\label{cov-iso}
\mathbf{E} ( B_{d} ^{H}(\mathbf{x} )B_{d}^{H}(\mathbf{y}))=
\frac{1}{2} \left ( \Vert {\mathbf{x}}\Vert ^{2H} + \Vert {\mathbf{y}}
\Vert ^{2H} -\Vert {\mathbf{x}}-\mathbf{y}\Vert ^{2H} \right ) \quad \mbox{ for
every } {\mathbf{x}}, \mathbf{y}\in \mathbb{R} ^{d},
\end{equation}
where $\Vert \cdot \Vert $ denotes the Euclidean norm in $\mathbb{R}
^{d}$. It can be also represented as a Wiener integral\index{Wiener integral} with respect to
the Wiener sheet, see \cite{He,Li}.

As in the one-parameter case, we define the $q$-variation of the
isotropic fBm\index{isotropic fBm} as the limit in probability as $n\to \infty $ of the
sequence
\begin{equation*}
S^{n, q} _{[A_{1}, A_{2}]} (B ^{H} )= \sum _{i=0} ^{n-1} \left | B_{d}
^{H} (\mathbf{x}_{i+1})- B _{d}^{H} (\mathbf{x} _{i})\right | ^{q},
\end{equation*}
where $\mathbf{x}_{i}= (x_{i} ^{(1)},\ldots , x_{i} ^{(d)}) $ with
$x_{i}^{(j)}=A_{1} + \frac{i}{n} (A_{2}-A_{1}) $ for $i=0,\ldots ,n$ and
$j=1,\ldots ,d$. And from \cite{MahTu} we know that the isotropic fBm
$(B^{H} (\mathbf{x}))_{\mathbf{x} \in \mathbb{R} ^{d}} $\index{isotropic fBm} has $\frac{1}{H}$-variation over $[A_{1}, A_{2}]$
which is equal to
\begin{equation*}
(A_{2}- A_{1} ) \mathbf{E} \vert B^{H} _{d}(\mathbf{1}) \vert ^{1/H}=(A
_{2}-A_{1}) \sqrt{d} \mathbf{E} \vert Z\vert ^{1/H}.
\end{equation*}

The $q$-variation of the isotropic fBm perturbed by a regular
multiparameter process has been obtained in \cite{MahTu}, Lemma
4.1.

\begin{lemma}
\label{l21}
Let $(B^{H} (\mathbf{x}))_{\mathbf{x} \in \mathbb{R} ^{d}} $ be a
$d$-parameter isotropic fBm and consider
a $d$-pa\-ram\-eter stochastic process $(X(\mathbf{x}))_{\mathbf{x}
\in \mathbb{R} ^{d}}$, independent
of $ B^{H}$, that satisfies
%
\begin{equation}
\label{26ss-2}
\mathbf{E} \xch{\big|}{(} X(\mathbf{x})- X(\mathbf{y}) \big| ^{2} \leq C
\Vert {\mathbf{x}} -\mathbf{y}\Vert ^{2}, \quad \mbox{ for every }
{\mathbf{x}}, \mathbf{y}\in \mathbb{R} ^{d}.
\end{equation}
Define
\begin{equation*}
Y(\mathbf{x})= B_{d} ^{H} (\mathbf{x})+ X(\mathbf{x}) \quad \mbox{ for every
} {\mathbf{x}} \in \mathbb{R} ^{d}.
\end{equation*}

Then:
\begin{enumerate}%
\item
The process $(Y(\mathbf{x}))_{\mathbf{x}\in \mathbb{R} ^{d}}$ has
$\frac{1}{H} $-variation which is equal to
\begin{equation*}
(A_{2}-A_{1}) \sqrt{d} \mathbf{E} \vert Z\vert ^{1/H}.
\end{equation*}
\item
If $H\in (0, \frac{1}{2})$ and $q\geq 2$,
%
\begin{align}
\label{c1111}
&\frac{1}{\sqrt{n}}V_{q,n}(Y ^{H} ):=\frac{1}{\sqrt{n}}\sum _{i=0}
^{n-1} \left [ \frac{n ^{ Hq}d ^{-Hq/2}}{ (A_{2}-A_{1}) ^{qH}} ( Y
^{H} (\mathbf{x}_{i+1} )- Y ^{H} (\mathbf{x}_{i})) ^{q} - \mu _{q}
\right ] \nonumber\\
&\to ^{(d)} N (0, \sigma _{ H, q } ^{2}).
\end{align}

\end{enumerate}
\end{lemma}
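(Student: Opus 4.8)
The plan is to reduce the $d$-parameter claim to the one-parameter Lemma~\ref{l1} by sampling the isotropic fBm along the main diagonal, which is exactly where the evaluation points sit. Writing $\mathbf{1}=(1,\ldots,1)\in\mathbb{R}^{d}$, the points are $\mathbf{x}_{i}=x_{i}\mathbf{1}$ with $x_{i}=A_{1}+\frac{i}{n}(A_{2}-A_{1})$, so all of them lie on the ray $\{x\mathbf{1}:x\ge 0\}$. I would first show that the scalar process $x\mapsto B_{d}^{H}(x\mathbf{1})$ is, in law, a constant multiple of a one-dimensional fBm.

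Using the covariance (\ref{cov-iso}) together with $\Vert x\mathbf{1}\Vert=\sqrt{d}\,\vert x\vert$ and $\Vert x\mathbf{1}-y\mathbf{1}\Vert=\sqrt{d}\,\vert x-y\vert$, one computes
\begin{equation*}
\mathbf{E}\big(B_{d}^{H}(x\mathbf{1})\,B_{d}^{H}(y\mathbf{1})\big)=\frac{d^{H}}{2}\big(\vert x\vert^{2H}+\vert y\vert^{2H}-\vert x-y\vert^{2H}\big),
\end{equation*}
which is $d^{H}$ times the covariance of a standard one-dimensional fBm. Hence, as processes in the scalar variable $x$,
\begin{equation*}
\big(B_{d}^{H}(x\mathbf{1})\big)_{x\ge 0}\equiv^{(d)}\big(d^{H/2}B^{H}(x)\big)_{x\ge 0},
\end{equation*}
with $B^{H}$ a one-dimensional fBm of the same Hurst index; this is precisely the scaling that the factor $d^{-Hq/2}$ in (\ref{c1111}) is meant to cancel. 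Setting $\widetilde{X}(x):=X(x\mathbf{1})$, the bound (\ref{26ss-2}) gives $\mathbf{E}\vert\widetilde{X}(x)-\widetilde{X}(y)\vert^{2}\le C\Vert x\mathbf{1}-y\mathbf{1}\Vert^{2}=Cd\,\vert x-y\vert^{2}$, so $\widetilde{X}$ satisfies the one-parameter hypothesis (\ref{23a-1}). Consequently the diagonal restriction $\big(Y(x\mathbf{1})\big)_{x}=\big(B_{d}^{H}(x\mathbf{1})+X(x\mathbf{1})\big)_{x}$ has the same law as the one-parameter perturbed fBm $Y^{H}=aB^{H}+\widetilde{X}$ of Lemma~\ref{l1} with $a=d^{H/2}$, and the $x_{i}$ form the uniform partition of $[A_{1},A_{2}]$ used there.

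It then remains to match constants. With $a=d^{H/2}$ the normalization in (\ref{c1111}) satisfies $\frac{n^{Hq}d^{-Hq/2}}{(A_{2}-A_{1})^{qH}}=\frac{n^{Hq}}{(A_{2}-A_{1})^{qH}a^{q}}$, so the statistic in (\ref{c1111}) coincides in distribution with $\frac{1}{\sqrt{n}}V_{q,n}(Y^{H})$ from (\ref{5a-1}); part~(2) then follows at once from (\ref{c11}) in Lemma~\ref{l1}, yielding the limit $N(0,\sigma_{H,q}^{2})$. Similarly, part~(1) follows from the first assertion of Lemma~\ref{l1} with $a=d^{H/2}$, since the resulting $\frac{1}{H}$-variation $\vert a\vert^{1/H}(A_{2}-A_{1})\mathbf{E}\vert Z\vert^{1/H}$ equals $\sqrt{d}\,(A_{2}-A_{1})\mathbf{E}\vert Z\vert^{1/H}$. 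I do not expect a genuine obstacle here: the entire content beyond invoking Lemma~\ref{l1} is the covariance identity of the second paragraph, and the main conceptual point is simply that sampling the isotropic fBm along the diagonal returns a rescaled scalar fBm, so the multiparameter problem collapses onto the already-established one-dimensional result.
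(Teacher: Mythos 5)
Your proposal is correct and is essentially the paper's own route: the lemma is quoted from \cite{MahTu} (Lemma 4.1), and the proof of Proposition \ref{pp8} given here performs exactly your reduction, identifying the diagonal increments $\bigl(B_{d}^{H}(\mathbf{x}_{i+1})-B_{d}^{H}(\mathbf{x}_{i})\bigr)_{i}$ in law with $d^{H/2}$ times the increments of a one-parameter fBm and then invoking Lemma \ref{l1} with $a=d^{H/2}$. One harmless discrepancy: part 1 of Lemma \ref{l1} as printed gives the $\tfrac{1}{H}$-variation as $a^{-1/H}\mathbf{E}\vert Z\vert^{1/H}(A_{2}-A_{1})$, whereas your use of $\vert a\vert^{1/H}$ is the correct reading and is what produces the stated factor $\sqrt{d}$.
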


It is immediate to deduce the rate of convergence in the above central
limit theorem. Recall that we denoted by $d_{W}$ the Wasserstein
distance.\index{Wasserstein distance}

\begin{prop}
\label{pp8}
Let $Y^{H}$ be as in the statement of Lemma \ref{l21}. Then for $n$ large,
\begin{equation*}
d_{W} \left ( \frac{1}{\sqrt{n}}V_{q,n}(Y ^{H} ), N (0, \sigma _{ H,
q } ^{2})\right ) \leq C \frac{1}{\sqrt{n}}.
\end{equation*}
\end{prop}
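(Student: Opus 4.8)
The plan is to reduce the $d$-parameter statement to the already established one-parameter Proposition \ref{pp1}. The key observation is that the variation $V_{q,n}(Y^H)$ in (\ref{c1111}) involves only the increments of $Y^H$ along the diagonal points $\mathbf{x}_i=x_i\mathbf{1}$, where $x_i=A_1+\frac{i}{n}(A_2-A_1)$ and $\mathbf{1}=(1,\ldots,1)\in\mathbb{R}^d$; along this diagonal the isotropic fBm degenerates into a rescaled one-parameter fBm, so no genuinely multiparameter machinery is required.

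First I would compute, using the covariance (\ref{cov-iso}) together with the identities $\Vert\mathbf{x}_i\Vert=\sqrt{d}\,\vert x_i\vert$ and $\Vert\mathbf{x}_i-\mathbf{x}_j\Vert=\sqrt{d}\,\vert x_i-x_j\vert$, that
\[
\mathbf{E}\left(B_d^H(\mathbf{x}_i)B_d^H(\mathbf{x}_j)\right)=d^{H}\,\frac{1}{2}\left(\vert x_i\vert^{2H}+\vert x_j\vert^{2H}-\vert x_i-x_j\vert^{2H}\right).
\]
This identifies $(B_d^H(\mathbf{x}_i))_i\equiv^{(d)}(d^{H/2}B^H_{x_i})_i$, with $B^H$ a standard one-parameter fBm of the same Hurst index $H$.

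Next, since $X$ is independent of $B_d^H$ and satisfies (\ref{26ss-2}), its restriction to the diagonal obeys $\mathbf{E}\vert X(\mathbf{x}_i)-X(\mathbf{x}_j)\vert^2\le Cd\,\vert x_i-x_j\vert^2$, which is precisely condition (\ref{23a-1}) of Lemma \ref{l1} (with a modified constant), and it stays independent of the rescaled fBm. Hence $(Y(\mathbf{x}_i))_i$ fits the framework of Lemma \ref{l1} and Proposition \ref{pp1} with the choice $a=d^{H/2}$. With this identification the normalizing factor $\frac{n^{Hq}d^{-Hq/2}}{(A_2-A_1)^{qH}}$ of (\ref{c1111}) coincides with the factor $\frac{n^{Hq}}{(A_2-A_1)^{qH}a^q}$ of (\ref{5a-1}), because $a^q=d^{Hq/2}$. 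Proposition \ref{pp1} then applies verbatim to the one-parameter perturbed fBm $(d^{H/2}B^H_{x_i}+X(\mathbf{x}_i))_i$ and yields the Wasserstein bound $C/\sqrt{n}$.

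The proof carries essentially no new difficulty: the only point requiring care is to check that the diagonal restriction reproduces exactly the scaling constant $d^{H/2}$, so that the normalizations in (\ref{c1111}) and (\ref{5a-1}) are matched and no probabilistic input beyond Proposition \ref{pp1} is needed. Alternatively, one may reprove the estimate directly along the lines of Proposition \ref{pp1}: decompose $V_{q,n}(Y^H)$ as the variation of the isotropic fBm plus a remainder $R_n$ with $\mathbf{E}\vert R_n\vert\le cn^{H-1}\le cn^{-1/2}$ (using $H<\frac{1}{2}$), and estimate the leading term through its Hermite chaos expansion by means of Proposition 6.2.2 and Corollary 7.4.3 in \cite{NP-book}.
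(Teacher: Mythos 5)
Your proposal is correct and follows essentially the same route as the paper: the paper's proof likewise observes that the increment vector $\bigl( B_{d}^{H}(\mathbf{x}_{i+1})-B_{d}^{H}(\mathbf{x}_{i})\bigr)_{i}$ has the same law as $d^{H/2}\bigl( B^{H}(x_{i+1})-B^{H}(x_{i})\bigr)_{i}$ for a one-parameter fBm, so that the law of $\frac{1}{\sqrt{n}}V_{q,n}$ does not depend on $d$, and then invokes the argument of Proposition \ref{pp1}. Your version merely spells out the covariance computation, the matching of the normalizing constants via $a=d^{H/2}$, and the transfer of condition (\ref{26ss-2}) to (\ref{23a-1}), which the paper leaves implicit.
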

\begin{proof}
We notice that the Gaussian vector $\left ( B_{d}^{H}
(\mathbf{x}_{i+1})- B_{d} ^{H} (\mathbf{x}_{i}) \right )_{ 0,1,\ldots,n-1}$
has the same law as $d^{H/2} (B^{H}({ x}_{j+1})-B^{H} ({{ x}_{j}}))
_{{ } 0,1,\ldots,n-1}$ where $B$ is a one-parameter fBm with the Hurst parameter
$H$ and we then apply Lemma \ref{l1}. Therefore, the distribution of the
sequence $\frac{1}{\sqrt{n}}V_{q,n}(B_{d} ^{H} ) $ is independent of
$d\geq 1$ and we can use the same argument as in the proof of
Proposition \ref{pp1} above.
\end{proof}

\subsection{Estimators for the drift paramater}%
\label{sec3.3}
Throughout this section we will assume (\ref{22m-1}). As in the previous
section, we will construct and analyze estimators for the drift
parameter $\theta $ by using the limit behavior of the variations (in
time and in space) of the process (\ref{sol2}).

\subsubsection{The law of the solution}%
\label{sec3.3.1}
Let us start by analyzing the distribution of the solution to
(\ref{jada}) and its link with the (bi)fractional Brownian motion.\index{fractional ! Brownian motion}
%
\begin{prop}
\label{pp9}
For every $\mathbf{x} \in \mathbb{R}^{d}$ and $\theta >0$, we have
\begin{equation*}
\left ( u_{\theta } (t,\mathbf{x}), t\geq 0\right ) \equiv ^{(d)}
\left ( \theta ^{-\frac{d-\gamma }{2\alpha }} c_{2, \alpha ,\gamma } B
_{t} ^{\frac{1}{2}, 1-\frac{d-\gamma }{\alpha }}, t\geq 0\right ),
\end{equation*}
where $B ^{\frac{1}{2}, 1-\frac{d-\gamma }{\alpha }} $ is a bifractional
Brownian motion\index{bifractional Brownian motion} with parameters $H=\frac{1}{2} $ and $K= 1-\frac{d-
\gamma }{\alpha }$ and the constant $c_{2, \alpha ,\gamma } $ is defined
by (\ref{ze2}).

\end{prop}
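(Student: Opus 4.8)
The plan is to reduce the statement to the known $\theta=1$ case (relation~(\ref{22m2})) by a scaling argument entirely analogous to the one used in Proposition~\ref{pp2}. First I would introduce the rescaled process $v_{\theta}(t,\mathbf{x}) := u_{\theta}(t/\theta,\mathbf{x})$ exactly as in Lemma~\ref{ll1}, except now for the white-colored noise. Starting from the mild solution~(\ref{sol2}) and performing the change of variable $s \mapsto \theta s$ inside the time integral, I expect to obtain
\begin{equation*}
v_{\theta}(t,\mathbf{x}) = \theta^{-\frac{1}{2}} \int_{0}^{t} \int_{\mathbb{R}^{d}} G_{\alpha}(t-s, \mathbf{x}-\mathbf{z})\, \widetilde{W}^{\gamma}(ds, d\mathbf{z}),
\end{equation*}
where $\widetilde{W}^{\gamma}(t,A) := \theta^{1/2} W^{\gamma}(t/\theta, A)$. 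The key point is that $\widetilde{W}^{\gamma}$ has the same covariance~(\ref{covwc}) as $W^{\gamma}$, since the time-scaling factor $\theta$ cancels against the $\theta^{-1}$ coming from the temporal part of the covariance $(t\wedge s)$, while the spatial Riesz-kernel part is untouched. Hence $\widetilde{W}^{\gamma}$ is again a white-colored noise, and $\theta^{1/2} v_{\theta}$ solves~(\ref{jada}) with $\theta=1$ driven by $\widetilde{W}^{\gamma}$.

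Next I would compute the covariance of $(u_{\theta}(t,\mathbf{x}), t\geq 0)$ directly, following the display in the proof of Proposition~\ref{pp2}. Fixing $\mathbf{x}$ and using $u_{\theta}(t,\mathbf{x}) = v_{\theta}(\theta t, \mathbf{x})$ together with the scaling $v_{\theta} \equiv^{(d)} \theta^{-1/2} u_{1}$, I obtain
\begin{align*}
\mathbf{E}\, u_{\theta}(t,\mathbf{x})\, u_{\theta}(s,\mathbf{x})
&= \theta^{-1}\, \mathbf{E}\, u_{1}(\theta t, \mathbf{x})\, u_{1}(\theta s, \mathbf{x}) \\
&= \theta^{-1} c_{1,\alpha,\gamma} \left[ (\theta t + \theta s)^{1-\frac{d-\gamma}{\alpha}} - |\theta t - \theta s|^{1-\frac{d-\gamma}{\alpha}} \right] \\
&= \theta^{-\frac{d-\gamma}{\alpha}} c_{2,\alpha,\gamma}^{2}\, \mathbf{E}\, B^{\frac{1}{2}, 1-\frac{d-\gamma}{\alpha}}_{t} B^{\frac{1}{2}, 1-\frac{d-\gamma}{\alpha}}_{s},
\end{align*}
where the second equality uses~(\ref{22m2}) (read off as the covariance of the bi-fBm, i.e. $\mathbf{E}\,u_{1}(t,\mathbf{x})u_{1}(s,\mathbf{x}) = c_{1,\alpha,\gamma}[(t+s)^{K}-|t-s|^{K}]$ with $K=1-\frac{d-\gamma}{\alpha}$), and the last equality factors out $\theta^{1-\frac{d-\gamma}{\alpha}}$ from the bracket and absorbs the remaining constant via~(\ref{ze2}). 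Since both processes are centered and Gaussian, equality of covariances gives equality in distribution, which is exactly the claimed identity with prefactor $\theta^{-\frac{d-\gamma}{2\alpha}} c_{2,\alpha,\gamma}$ (the square root of $\theta^{-\frac{d-\gamma}{\alpha}} c_{2,\alpha,\gamma}^{2}$).

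I do not anticipate a serious obstacle here: the argument is a direct transcription of Proposition~\ref{pp2} with $1-\frac{1}{\alpha}$ replaced by $K=1-\frac{d-\gamma}{\alpha}$ and $c_{2,\alpha}$ by $c_{2,\alpha,\gamma}$. The only point requiring a little care is verifying that the rescaled noise $\widetilde{W}^{\gamma}$ genuinely retains covariance~(\ref{covwc}) — that is, that the spatial Riesz structure is invariant under the purely temporal rescaling — but this is immediate from the product form of~(\ref{covwc}) since only the $(t\wedge s)$ factor is affected. One should also note in passing that condition~(\ref{22m-1}), $d<\gamma+\alpha$, guarantees $K=1-\frac{d-\gamma}{\alpha}\in(0,1]$ so that the bi-fBm in the statement is well defined.
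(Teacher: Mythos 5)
Your proposal is correct and follows essentially the same route as the paper: introduce $v_{\theta}(t,\mathbf{x})=u_{\theta}(t/\theta,\mathbf{x})$, observe that the rescaled noise $\widetilde{W}^{\gamma}$ keeps the covariance (\ref{covwc}) so that $v_{\theta}\equiv^{(d)}\theta^{-1/2}u_{1}$, and then identify the covariance of $u_{\theta}(\cdot,\mathbf{x})$ with that of $\theta^{-\frac{d-\gamma}{2\alpha}}c_{2,\alpha,\gamma}B^{\frac12,1-\frac{d-\gamma}{\alpha}}$ exactly as in Proposition~\ref{pp2}. The covariance computation matches the paper's line for line, and your side remarks on the invariance of the spatial Riesz part and on condition (\ref{22m-1}) are accurate.
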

\begin{proof}
Denote
\begin{equation*}
v_{\theta }(t,\mathbf{x})= u_{\theta } \left ( \frac{t}{\theta },
\mathbf{x}\right )\quad  \mbox{ for every } t\geq 0, \mathbf{x}\in
\mathbb{R} ^{d}.
\end{equation*}
Then, as in Lemma \ref{ll1}, $v_{\theta }$ solves the equation
%
\begin{equation}
\label{3}
\frac{\partial v_{\theta }}{\partial t} (t,\mathbf{x})= - (-\Delta )
^{\frac{\alpha }{2}}v_{\theta }(t,\mathbf{x})+(\theta ) ^{-
\frac{1}{2}} \dot{\widetilde{W}^{\gamma }} (t,\mathbf{x}),
\hskip0.4cm t\geq 0,\mathbf{x}\in \mathbb{R}^{d},
\end{equation}
with $ v_{\theta } (0, \mathbf{x})=0$ for every $\mathbf{x}\in
\mathbb{R}^{d}$, where $\dot{\widetilde{W}^{\gamma }}$ is a white
colored Gaussian noise\index{Gaussian noise} (i.e. a Gaussian process\index{Gaussian process} with zero mean and
covariance (\ref{covwc})).

Fix $\mathbf{x}\in \mathbb{R}^{d}$ and $\theta >0$.
For every $s, t\geq 0$, we have
\begin{align*}
\mathbf{E} u_{\theta } (t, \mathbf{x}) u_{\theta } (s,\mathbf{x})
&=
\mathbf{E} v_{\theta } (\theta t, \mathbf{x}) v_{\theta }(\theta s,
\mathbf{x})
\\
&= \theta ^{-1} \mathbf{E} u_{1} (\theta t, \mathbf{x}) u_{1} (\theta
s, \mathbf{x})
\\
&=\theta ^{-1} c_{1, \alpha ,\gamma } \left [ (\theta t+\theta s)
^{1-\frac{d-\gamma }{\alpha }} -\vert \theta t -\theta s
\vert ^{1-\frac{d-\gamma }{\alpha }} \right ]
\\
&= \theta ^{-\frac{d-\gamma }{\alpha }} c_{2, \alpha ,\gamma } ^{2}
\mathbf{E} B ^{\frac{1}{2}, 1-\frac{d-\gamma }{\alpha }} _{t} B ^{
\frac{1}{2}, 1-\frac{d-\gamma }{\alpha }} _{s}.\qedhere
\end{align*}

\end{proof}

For the behavior with respect to the space variable, we obtain the
following result.
%
\begin{prop}
\label{pp10}
For every $t\geq 0$, $\theta >0$, we have the following equality in
distribution
\begin{equation*}
\left ( u_{\theta }(t,\mathbf{x}),\mathbf{x}\in \mathbb{R}^{d}\right )
\equiv ^{(d)} \left ( \theta ^{-\frac{1}{2}} m_{\alpha ,\gamma } B ^{ \frac{
\alpha +\gamma -d}{2}} (\mathbf{x}) + S_{\theta t}(\mathbf{x}),
\mathbf{x}\in \mathbb{R}^{d} \right ),
\end{equation*}
where $B^{\frac{\alpha +\gamma --d}{2}}$ is a fractional Brownian motion\index{fractional ! Brownian motion}
with the Hurst parameter $\frac{\alpha +\gamma -d}{2}\in (0, \frac{1}{2}]
$, $(S_{\theta t}(\mathbf{x}))_{\mathbf{x}\in \mathbb{R}^{d}} $ is a
centered Gaussian process with $C^{\infty } $ sample paths and
$m_{\alpha , \gamma }$ from (\ref{2k-1}).

\end{prop}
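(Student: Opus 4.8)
The plan is to reproduce, in the white-colored setting, the scaling argument already used for Proposition \ref{pp3}. The only structural inputs are the time-scaling field introduced in the proof of Proposition \ref{pp9} and the spatial law (\ref{2k-1}) of the solution at $\theta=1$; everything else is bookkeeping, so I do not anticipate a serious obstacle. First I would recall $v_\theta(t,\mathbf{x}) = u_\theta(t/\theta,\mathbf{x})$, so that trivially $\left( u_\theta(t,\mathbf{x}),\mathbf{x}\in\mathbb{R}^d\right) = \left( v_\theta(\theta t,\mathbf{x}),\mathbf{x}\in\mathbb{R}^d\right)$.

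By (\ref{3}), $v_\theta$ is the mild solution of the heat equation with the $\theta=1$ operator driven by the rescaled noise $\theta^{-1/2}\dot{\widetilde{W}^\gamma}$, where $\widetilde{W}^\gamma$ shares the covariance (\ref{covwc}), hence the law, of $W^\gamma$. Since the mild solution (\ref{sol2}) is a Wiener integral and therefore linear in the driving noise, the intensity factors out: repeating in the present $d$-dimensional white-colored setting the change of variables of Lemma \ref{ll1} gives $\left(v_\theta(t,\mathbf{x})\right)\equiv^{(d)}\theta^{-1/2}\left(u_1(t,\mathbf{x})\right)$ jointly in $t$ and $\mathbf{x}$. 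Evaluating at time $\theta t$ then yields
\begin{equation*}
\left( u_\theta(t,\mathbf{x}),\mathbf{x}\in\mathbb{R}^d\right) \equiv^{(d)} \theta^{-\frac{1}{2}}\left( u_1(\theta t,\mathbf{x}),\mathbf{x}\in\mathbb{R}^d\right).
\end{equation*}

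Finally I would insert the spatial decomposition (\ref{2k-1}) of $u_1$ at time $\theta t$, so that the right-hand side becomes $\theta^{-1/2}\left( m_{\alpha,\gamma} B^{\frac{\alpha+\gamma-d}{2}}(\mathbf{x}) + S_{\theta t}(\mathbf{x})\right)$. The factor $\theta^{-1/2}$ scales the isotropic fBm term into $\theta^{-1/2} m_{\alpha,\gamma} B^{\frac{\alpha+\gamma-d}{2}}$, while $\theta^{-1/2} S_{\theta t}$ stays a centered Gaussian field with $C^\infty$ sample paths and may be relabelled $S_{\theta t}$; this is exactly the asserted identity. The range $\frac{\alpha+\gamma-d}{2}\in(0,\frac{1}{2}]$ is inherited verbatim from (\ref{2k-1}), its positivity being equivalent to the standing assumption (\ref{22m-1}) that $d<\gamma+\alpha$. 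The one point deserving care is that the scaling $v_\theta\equiv^{(d)}\theta^{-1/2}u_1$ is exact rather than merely asymptotic: this rests on the linearity of the Wiener integral in the noise together with the scaling invariance of the law of $W^\gamma$. Once this is noted, the proposition follows just as Proposition \ref{pp3}.
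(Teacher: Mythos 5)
Your proposal is correct and follows essentially the same route as the paper: write $u_\theta(t,\cdot)=v_\theta(\theta t,\cdot)$, use the scaling $v_\theta\equiv^{(d)}\theta^{-1/2}u_1$ coming from (\ref{3}), and then insert the spatial decomposition (\ref{2k-1}) at time $\theta t$. Your explicit remark that $\theta^{-1/2}S_{\theta t}$ is relabelled as $S_{\theta t}$ makes transparent a small abuse of notation that the paper leaves implicit, but the argument is the same.
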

\begin{proof}
The result is immediate since for a fixed time $t>0$
\begin{equation*}
\left ( u_{\theta }(t,\mathbf{x}), \mathbf{x}\in \mathbb{R}^{d}\right )=
\left ( v_{\theta }(\theta t,\mathbf{x}), \mathbf{x}\in \mathbb{R}
^{d}\right )\equiv ^{(d)} \theta ^{-\frac{1}{2} }\left ( u_{1} (\theta
t, \mathbf{x}), \mathbf{x}\in \mathbb{R}^{d}\right )
\end{equation*}
\begin{equation*}
\equiv ^{(d)} \left ( \theta ^{-\frac{1}{2}} m_{\alpha ,\gamma } B ^{ \frac{
\alpha +\gamma -d}{2}} (\mathbf{x}) + S_{\theta t}(\mathbf{x}),
\mathbf{x}\in \mathbb{R}^{d} \right ).\qedhere
\end{equation*}
\end{proof}

\subsubsection{Estimators based on the temporal variation}%
\label{sec3.3.2}
Again $t_{j}= A_{1}+ \frac{j}{n} (A_{2}-A_{1})$, $j=0,\ldots , n $, will denote
a partition of the interval $[A_{1}, A_{2}]$.

\begin{lemma}
Assume (\ref{22m-1}). Let $u_{\theta } $ be the solution to
(\ref{jada}). Then for every $\mathbf{x}\in \mathbb{R}^{d}$, the process
$\left ( u_{\theta }(t,\mathbf{x}), t\geq 0\right ) $ admits
$\frac{2\alpha }{\alpha +\gamma -d}$-variation over the interval
$[A_{1} ,A_{2}]$, i.e.
%
\begin{eqnarray}
\label{28f-33}
&&S^{n, \frac{2\alpha }{\alpha +\gamma -d}} _{[A_{1}, A_{2}]} : =
\sum _{i=0}^{n-1} \left | u_{\theta }(t_{j+1}, \mathbf{x})- u_{\theta
} (t_{j}, \mathbf{x}) \right | ^{\frac{2\alpha }{\alpha +\gamma -d}}
\nonumber \\
&& \to _{n\to \infty }c_{2, \alpha ,\gamma } ^{\frac{2\alpha }{\alpha
+\gamma -d}}2 ^{\frac{d-\gamma }{\alpha +\gamma -d}}
\mu _{\frac{2\alpha }{\alpha +\gamma -d}} (A_{2}-A_{1}) \vert \theta
\vert ^{\frac{\gamma -d}{\alpha +\gamma -d}}
\end{eqnarray}
in probability.
\end{lemma}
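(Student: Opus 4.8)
The plan is to reproduce, in the white--colored setting, the argument that produced (\ref{28f-3}) in the space--time white noise case. The temporal law of the solution is already identified in Proposition \ref{pp9}, which states that, for fixed $\mathbf{x}$, the process $(u_\theta(t,\mathbf{x}),\,t\geq 0)$ equals in distribution a constant multiple of a bifractional Brownian motion with parameters $H=\tfrac12$ and $K=1-\tfrac{d-\gamma}{\alpha}$, the multiplicative constant being $\theta^{-\frac{d-\gamma}{2\alpha}}c_{2,\alpha,\gamma}$. The first thing I would note is that the one-parameter fBm attached to this bi-fBm through the decomposition (\ref{5m-1}) has Hurst index $HK=\tfrac12\bigl(1-\tfrac{d-\gamma}{\alpha}\bigr)=\tfrac{\alpha+\gamma-d}{2\alpha}$, whose reciprocal is exactly the order $\tfrac{2\alpha}{\alpha+\gamma-d}$ of the variation claimed in (\ref{28f-33}).

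Next I would apply the relation (\ref{5m-1}) with these values of $H$ and $K$. There the conversion constant is $C_2=2^{\frac{1-K}{2}}=2^{\frac{d-\gamma}{2\alpha}}$, so adding to $(u_\theta(t,\mathbf{x}),\,t\geq0)$ an independent scaled copy of the smooth Gaussian process $X^{H,K}$ produces a process equal in distribution to $a\,B^{\frac{\alpha+\gamma-d}{2\alpha}}$, where $a=\theta^{-\frac{d-\gamma}{2\alpha}}c_{2,\alpha,\gamma}\,2^{\frac{d-\gamma}{2\alpha}}$ and $B^{\frac{\alpha+\gamma-d}{2\alpha}}$ is an fBm of Hurst index $H=\tfrac{\alpha+\gamma-d}{2\alpha}$. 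Equivalently, $u_\theta(\cdot,\mathbf{x})$ is a perturbed fractional Brownian motion in the sense of Lemma \ref{l1}, the perturbation having $C^\infty$ paths and hence satisfying (\ref{23a-1}). Invoking part (1) of Lemma \ref{l1}, exactly as in the derivation of (\ref{28f-3}), the $\tfrac1H$-variation of $u_\theta(\cdot,\mathbf{x})$ over $[A_1,A_2]$ equals $a^{1/H}(A_2-A_1)\mu_{1/H}$. Substituting $1/H=\tfrac{2\alpha}{\alpha+\gamma-d}$ and expanding $a^{1/H}$ yields the three factors $c_{2,\alpha,\gamma}^{\frac{2\alpha}{\alpha+\gamma-d}}$, $2^{\frac{d-\gamma}{\alpha+\gamma-d}}$ and $\theta^{-\frac{d-\gamma}{\alpha+\gamma-d}}$; since $-\tfrac{d-\gamma}{\alpha+\gamma-d}=\tfrac{\gamma-d}{\alpha+\gamma-d}$ and $\theta>0$, this is precisely the right-hand side of (\ref{28f-33}).

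The only genuinely delicate point --- and the step I would treat as the main obstacle --- is verifying that Lemma \ref{l1} is actually applicable, i.e.\ that $H=\tfrac{\alpha+\gamma-d}{2\alpha}$ lies in $(0,\tfrac12]$. The upper bound $H\leq\tfrac12$ amounts to $\gamma\leq d$, which holds because $0<\gamma<d$ in the Riesz kernel (\ref{riesz}); the positivity $H>0$ amounts to $d<\alpha+\gamma$, which is exactly the standing existence condition (\ref{22m-1}). One also checks $a>0$ (so $a\neq0$, as Lemma \ref{l1} requires), which is immediate since $\theta>0$ and $c_{2,\alpha,\gamma}>0$ by (\ref{ze2}). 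Beyond this range verification, the remainder is the routine bookkeeping of the multiplicative constants through the exponent $1/H$, with no further analytic difficulty.
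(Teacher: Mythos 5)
Your argument is correct, but it follows a different route from the one the paper takes for this lemma. The paper's proof goes through the time-rescaling of Lemma \ref{ll1}/Proposition \ref{pp9}: it writes $u_{\theta}(t,\mathbf{x})=v_{\theta}(\theta t,\mathbf{x})$ with $(v_{\theta}(t,\mathbf{x}),t\geq 0)\equiv^{(d)}(\theta^{-1/2}u_{1}(t,\mathbf{x}),t\geq 0)$, so the variation of $u_{\theta}$ over $[A_{1},A_{2}]$ becomes the variation of $\theta^{-1/2}u_{1}$ over the dilated interval $[\theta A_{1},\theta A_{2}]$, and then it cites the known exact $\frac{2\alpha}{\alpha+\gamma-d}$-variation of $u_{1}$ (Proposition 4.3 in \cite{MahTu}); the factor $\vert\theta\vert^{\frac{\gamma-d}{\alpha+\gamma-d}}$ arises as $\vert\theta^{-1/2}\vert^{\frac{2\alpha}{\alpha+\gamma-d}}\cdot\theta$. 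You instead start from the bi-fBm law of $u_{\theta}$ in Proposition \ref{pp9}, pass to a perturbed fBm of Hurst index $HK=\frac{\alpha+\gamma-d}{2\alpha}$ via the decomposition (\ref{5m-1}), and apply Lemma \ref{l1}, part (1). This is exactly the mechanism the paper uses in Section \ref{sec2.3.2} for the white-noise analogue (\ref{28f-3}) and in the proof of Proposition \ref{pp22}, so it is entirely consistent with the paper's toolkit; it has the advantage of being self-contained within the results already stated here (rather than citing \cite{MahTu} again), at the cost of a slightly longer bookkeeping of the constant $a=\theta^{-\frac{d-\gamma}{2\alpha}}c_{2,\alpha,\gamma}2^{\frac{d-\gamma}{2\alpha}}$. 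Your verification that $H=\frac{\alpha+\gamma-d}{2\alpha}\in(0,\frac12]$ under (\ref{22m-1}) and $0<\gamma<d$ is the right hypothesis check. Two small remarks: you correctly use the exponent $a^{1/H}$ (the statement of Lemma \ref{l1}, part (1), prints $a^{-1/H}$, which is inconsistent with the normalization in (\ref{5a-1}) and with the target limit, so your sign is the right one); and strictly speaking the exact-variation limit involves $\mathbf{E}\vert Z\vert^{1/H}$ rather than $\mu_{1/H}$, though the two coincide when $\frac{2\alpha}{\alpha+\gamma-d}$ is an even integer, which is the regime used throughout.
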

\begin{proof}
Clearly, for fixed $\mathbf{x}\in \mathbb{R} ^{d}$,
\begin{equation*}
\sum _{i=0} ^{n-1} \left | u_{\theta } (t_{j+1}, \mathbf{x})-u_{\theta
}(t_{j}, \mathbf{x})\right | ^{\frac{2\alpha }{\alpha +\gamma -d} }=
\sum _{i=0} ^{n-1} \left | v(\theta t_{j+1},\mathbf{x})-v(\theta t_{j},
\mathbf{x})\right | ^{\frac{2\alpha }{\alpha +\gamma -d} },
\end{equation*}
where $(v_{\theta }(t,\mathbf{x}), t\geq 0) \equiv ^{(d)} (
\theta ^{-\frac{1}{2}}u_{1}(t,\mathbf{x}), t\geq 0)$. And from
Proposition $4.3$ in \cite{MahTu} we know that $u_{1} $ admits a
variation of order $\frac{2\alpha }{\alpha +\gamma -d}$ which is equal
to\break
$ c_{2,\alpha ,\gamma }^{\frac{2\alpha }{\alpha +\gamma -d}}C_{
\frac{1}{2},1-\frac{d-\gamma }{\alpha }}(A_{2}-A_{1})$ with
$C_{\frac{1}{2},1-\frac{d-\gamma }{\alpha }}=2^{\frac{d-\gamma }{
\alpha +\gamma -d}}\mu _{\frac{2\alpha }{\alpha +\gamma -d}}$ and it means
that
\begin{align*}
&\sum _{i=0} ^{n-1} \left | u_{\theta } (t_{j+1}, \mathbf{x})-u_{
\theta } (t_{j}, \mathbf{x})\right | ^{\frac{2\alpha }{\alpha +\gamma
-d} }
\\
&\to _{n\to \infty } c_{2, \alpha ,\gamma } ^{\frac{2\alpha }{\alpha +
\gamma -d}}2 ^{\frac{d-\gamma }{\alpha +\gamma -d}}
\mu _{\frac{2\alpha }{\alpha +\gamma -d}}(\theta A_{2}-\theta A_{1})
\vert \theta ^{-\frac{1}{2}} \vert ^{\frac{2\alpha }{\alpha +\gamma -d}}
\\
&c_{2, \alpha ,\gamma } ^{\frac{2\alpha }{\alpha +\gamma -d}}2 ^{\frac{d-
\gamma }{\alpha +\gamma -d}} \mu _{\frac{2\alpha }{\alpha +\gamma -d}}
(A_{2}-A_{1}) \vert \theta \vert ^{\frac{\gamma -d}{\alpha +\gamma -d}}.\qedhere
\end{align*}
\end{proof}

From relation (\ref{28f-33}) we can naturally define the following
estimator for the parameter $\theta >0$ of the stochastic partial
differential equation (\ref{jada})
%
\begin{eqnarray}
\label{hatt3}
\widehat{\theta }_{n,3}
&=& \left ( \left ( c_{2, \alpha ,\gamma }
^{\frac{2\alpha }{\alpha +\gamma -d}} 2^{\frac{d-\gamma }{\alpha +
\gamma -d}}\mu _{\frac{2\alpha }{\alpha +\gamma -d}}
(A_{2}-A_{1})\right ) ^{-1}\right .
\nonumber\\
&&\times \left . \sum _{i=0} ^{n-1} \left | u_{\theta }(t_{j+1},
\mathbf{x})-u_{\theta }(t_{j}, \mathbf{x})\right | ^{\frac{2\alpha }{
\alpha +\gamma -d} } \right ) ^{\frac{\alpha +\gamma -d}{\gamma -d}}
\nonumber
\\
&=& \ \left ( c_{2, \alpha ,\gamma } ^{\frac{2\alpha }{\alpha +\gamma
-d}} 2^{\frac{d-\gamma }{\alpha +\gamma -d}}
\mu _{\frac{2\alpha }{\alpha +\gamma -d}}(A_{2}-A_{1})\right )^{\frac{d-
\gamma }{\alpha +\gamma -d}}
\nonumber
\\
&&\times \left ( S^{n, \frac{2\alpha }{\alpha +\gamma -d}}(u_{\theta
}(\cdot , \mathbf{x}))\right ) ^{\frac{\alpha +\gamma -d}{\gamma -d}},
\end{eqnarray}
and so
%
\begin{equation}
\label{5n-11}
\widehat{\theta } _{n,3} ^{\frac{\gamma -d}{\alpha +\gamma -d}} =
\frac{1}{ c_{2, \alpha ,\gamma } ^{\frac{2\alpha }{\alpha +\gamma -d}}
2^{\frac{d-\gamma }{\alpha +\gamma -d}}
\mu _{\frac{2\alpha }{\alpha +\gamma -d}} (A_{2}-A_{1})} \sum _{i=0}
^{n-1} \left | u_{\theta }(t_{j+1},\mathbf{x})-u_{\theta }(t_{j},
\mathbf{x})\right | ^{\frac{2\alpha }{\alpha +\gamma -d} }.
\end{equation}

We have the following asymptotic behavior.

\begin{prop}
\label{pp22}
Assume $\frac{2\alpha }{\alpha +\gamma -d}:=q$ is an even integer and
consider the estimator $\widehat{\theta }_{n,3}$ in (\ref{hatt3}). Then
$\widehat{\theta }_{n,3} \to _{n \infty }\theta $ in probability and
%
\begin{equation}
\label{5m-22}
\sqrt{n} \left [ \widehat{\theta }_{n,3} ^{\frac{\gamma -d}{\alpha
+\gamma -d} } -\theta ^{\frac{\gamma -d}{\alpha +\gamma -d} }\right ]
\to ^{(d)} N(0, s_{3, \theta , \alpha , \gamma } ^{2}) \mbox{ with }s
_{3, \theta , \alpha , \gamma } ^{2} = \sigma _{\frac{1}{q}, q} ^{2}
\theta ^{\frac{2(\gamma -d)}{\alpha +\gamma -d} }
\mu _{\frac{2\alpha }{\alpha +\gamma -d} } ^{-2}\xch{,}{.}
\end{equation}
and for $n$ large enough,
%
\begin{equation}
\label{22m-4}
d_{W}\left ( \sqrt{n} \left [ \widehat{\theta }_{n,3} ^{\frac{
\gamma -d}{\alpha +\gamma -d} } -
\theta ^{\frac{\gamma -d}{\alpha +\gamma -d} }\right ] , N(0, s_{
\theta , \alpha } ^{2})\right ) \leq c\frac{1}{\sqrt{n}}.
\end{equation}
\end{prop}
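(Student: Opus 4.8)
The plan is to mirror the argument of Proposition \ref{pp4}, replacing the temporal law coming from Proposition \ref{pp2} by its white--colored counterpart in Proposition \ref{pp9}. First I would combine Proposition \ref{pp9}, which gives $(u_{\theta}(t,\mathbf{x}),t\ge 0)\equiv^{(d)}(\theta^{-\frac{d-\gamma}{2\alpha}}c_{2,\alpha,\gamma}B_{t}^{\frac12,1-\frac{d-\gamma}{\alpha}},t\ge 0)$, with the fBm/bi-fBm decomposition (\ref{5m-1}) applied for $H=\frac12$ and $K=1-\frac{d-\gamma}{\alpha}$ (so that $HK=\frac{\alpha+\gamma-d}{2\alpha}$ and $C_{2}=2^{\frac{1-K}{2}}=2^{\frac{d-\gamma}{2\alpha}}$). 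This shows that, after adding an independent smooth Gaussian process $X$ satisfying (\ref{23a-1}),
\begin{equation*}
u_{\theta}(t,\mathbf{x})+c_{2,\alpha,\gamma}\theta^{-\frac{d-\gamma}{2\alpha}}X_{t}\equiv^{(d)}c_{2,\alpha,\gamma}\theta^{-\frac{d-\gamma}{2\alpha}}2^{\frac{d-\gamma}{2\alpha}}B^{\frac{\alpha+\gamma-d}{2\alpha}},
\end{equation*}
i.e. $(u_{\theta}(t,\mathbf{x}),t\ge 0)$ is a perturbed fBm with Hurst index $H=\frac{\alpha+\gamma-d}{2\alpha}$ and amplitude $a=c_{2,\alpha,\gamma}\theta^{-\frac{d-\gamma}{2\alpha}}2^{\frac{d-\gamma}{2\alpha}}$. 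Since $0<\gamma<d$ we have $H<\frac12$ strictly, and the hypothesis that $q:=1/H=\frac{2\alpha}{\alpha+\gamma-d}$ is an even integer places us exactly in the regime covered by part (2) of Lemma \ref{l1}.

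Next I would apply relation (\ref{c11}) of Lemma \ref{l1} with this $H$, with $q=1/H$ and with $a$ as above. Because $Hq=1$, the normalization $n^{Hq}/((A_{2}-A_{1})^{qH}a^{q})$ reduces to $n/((A_{2}-A_{1})a^{q})$, and a direct computation gives $a^{q}=c_{2,\alpha,\gamma}^{\frac{2\alpha}{\alpha+\gamma-d}}2^{\frac{d-\gamma}{\alpha+\gamma-d}}\theta^{\frac{\gamma-d}{\alpha+\gamma-d}}$, so that
\begin{equation*}
\frac{1}{\sqrt{n}}\sum_{i=0}^{n-1}\left[\frac{n\,\theta^{\frac{d-\gamma}{\alpha+\gamma-d}}}{c_{2,\alpha,\gamma}^{\frac{2\alpha}{\alpha+\gamma-d}}2^{\frac{d-\gamma}{\alpha+\gamma-d}}(A_{2}-A_{1})}\bigl(u_{\theta}(t_{j+1},\mathbf{x})-u_{\theta}(t_{j},\mathbf{x})\bigr)^{q}-\mu_{q}\right]\to^{(d)}N\bigl(0,\sigma_{\frac1q,q}^{2}\bigr).
\end{equation*}
Using the defining identity (\ref{5n-11}) to replace the variation sum by $\widehat{\theta}_{n,3}^{\frac{\gamma-d}{\alpha+\gamma-d}}$, the bracket telescopes into $\mu_{q}\theta^{\frac{d-\gamma}{\alpha+\gamma-d}}[\widehat{\theta}_{n,3}^{\frac{\gamma-d}{\alpha+\gamma-d}}-\theta^{\frac{\gamma-d}{\alpha+\gamma-d}}]$, exactly as in the proof of Proposition \ref{pp4}; dividing through by $\mu_{q}\theta^{\frac{d-\gamma}{\alpha+\gamma-d}}$ then yields (\ref{5m-22}) with $s_{3,\theta,\alpha,\gamma}^{2}=\sigma_{\frac1q,q}^{2}\theta^{\frac{2(\gamma-d)}{\alpha+\gamma-d}}\mu_{q}^{-2}$.

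The consistency $\widehat{\theta}_{n,3}\to\theta$ in probability I would read directly off the $q$-variation limit (\ref{28f-33}): dividing both sides there by $c_{2,\alpha,\gamma}^{q}2^{\frac{d-\gamma}{\alpha+\gamma-d}}\mu_{q}(A_{2}-A_{1})$ gives $\widehat{\theta}_{n,3}^{\frac{\gamma-d}{\alpha+\gamma-d}}\to\theta^{\frac{\gamma-d}{\alpha+\gamma-d}}$ in probability, and raising to the power $\frac{\alpha+\gamma-d}{\gamma-d}$ (a continuous map on $(0,\infty)$) gives the claim. For the Berry--Ess\'een bound (\ref{22m-4}), since $H=\frac{\alpha+\gamma-d}{2\alpha}\leq\frac12$, Proposition \ref{pp1} furnishes $d_{W}(\frac{1}{\sqrt{n}}V_{q,n}(Y^{H}),N(0,\sigma_{H,q}^{2}))\leq C/\sqrt{n}$; because the normalized quantity in the display above \emph{is} $\frac{1}{\sqrt{n}}V_{q,n}(Y^{H})$, and $\sqrt{n}[\widehat{\theta}_{n,3}^{\frac{\gamma-d}{\alpha+\gamma-d}}-\theta^{\frac{\gamma-d}{\alpha+\gamma-d}}]$ differs from it only by the deterministic scalar $(\mu_{q}\theta^{\frac{d-\gamma}{\alpha+\gamma-d}})^{-1}$, the scaling property $d_{W}(cF,cG)=|c|\,d_{W}(F,G)$ transfers the $1/\sqrt{n}$ rate to the estimator, giving (\ref{22m-4}) with variance $s_{3,\theta,\alpha,\gamma}^{2}$.

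The argument carries no conceptual difficulty beyond Proposition \ref{pp4}. The only step requiring care is the bookkeeping of the exponents of $\theta$, $2$ and $c_{2,\alpha,\gamma}$, so that the normalizing constant produced by $a^{q}$ matches the constant hard-wired into the estimator (\ref{5n-11}) and the centering term $\mu_{q}$ cancels correctly; I expect this purely computational tracking of powers through $a^{q}$ to be the main (mild) obstacle.
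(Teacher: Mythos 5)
Your proposal is correct and follows essentially the same route as the paper: Proposition \ref{pp9} combined with the decomposition (\ref{5m-1}) identifies $(u_{\theta}(t,\mathbf{x}))_{t\ge 0}$ as a perturbed fBm with Hurst index $\frac{\alpha+\gamma-d}{2\alpha}$, relation (\ref{c11}) of Lemma \ref{l1} with $q=1/H$ gives the CLT, the identity (\ref{5n-11}) converts it into (\ref{5m-22}), and Proposition \ref{pp1} yields the Wasserstein rate. Your explicit tracking of the amplitude $a$ and the check that $a^{q}$ reproduces the normalizing constant of (\ref{5n-11}) is exactly the computation the paper leaves implicit.
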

\begin{proof}
From Proposition \ref{pp9} and the relation between the fractional and
bifractional Brownian motion\index{bifractional Brownian motion} (see (\ref{5m-1})), we can see that, as
$n\to \infty $,
\begin{equation*}
\left ( c_{2, \alpha , \gamma } ^{-1} 2 ^{\frac{d-\gamma }{2\alpha }}
\theta ^{\frac{d-\gamma }{2\alpha }} u_{\theta } (t,\mathbf{x}), t
\geq 0\right )
\end{equation*}
%
converges to a perturbed fBm\index{perturbed fBm} with Hurst parameter $H= \frac{\alpha -d+\gamma }{2
\alpha }$. By taking $H=\frac{\alpha +\gamma -d}{2\alpha }$ and $q=
\frac{1}{H} = \frac{2\alpha }{\alpha +\gamma -d } $ in Lemma \ref{l1},
we get
\begin{eqnarray*}
&&\frac{1}{\sqrt{n}} \sum _{i=0} ^{n-1} \left [\frac{ n
\theta ^{\frac{d-\gamma }{\alpha +\gamma -d}} }{ c_{2, \alpha ,\gamma
} ^{\frac{2\alpha }{\alpha +\gamma -d}} 2^{\frac{d-\gamma }{\alpha +
\gamma -d}} (A_{2}-A_{1}) }\left ( u_{\theta }(t_{j+1}, \mathbf{x})-u
_{\theta }(t_{j}, \mathbf{x})\right ) ^{\frac{2\alpha }{\alpha +
\gamma -d} }- \mu _{\frac{2\alpha }{\alpha +\gamma -d} }\right ]
\\
&&\to \xch{N(0, \sigma _{\frac{1}{q}, q} ^{2})}{N(0, \sigma _{\frac{1}{q}, q} ^{2}).}
\end{eqnarray*}
or, equivalently
\begin{equation*}
\sqrt{n} \mu _{\frac{2\alpha }{\alpha +\gamma -d} }
\theta ^{\frac{d-\gamma }{\alpha +\gamma -d} } \left [
\widehat{\theta }_{n,3} ^{\frac{\gamma -d}{\alpha +\gamma -d} } -
\theta ^{\frac{\gamma -d}{\alpha +\gamma -d} }\right ] \to N(0,
\sigma _{\frac{1}{q}, q} ^{2}),
\end{equation*}
which is equivalent to (\ref{5m-2}). The bound (\ref{22m-4}) follows
easily from Proposition \ref{pp1}.
\end{proof}

We finally obtain the asymptotic normality and the rate of convergence
for the estimator $\widehat{\theta }_{n,3}$.

\begin{prop}
Let $\widehat{\theta }_{n,3}$ be given by (\ref{hatt3}) and
$s_{3,\theta , \alpha , \gamma }$ be given by (\ref{5m-22}). Then as
$n\to \infty $,
\begin{equation*}
\sqrt{n} \left ( \widehat{\theta }_{n,3}- \theta \right ) \to ^{(d)}
N \left (0, s_{3,\theta , \alpha , \gamma } \left ( \frac{\alpha +
\gamma -d}{\gamma -d}\right ) ^{2}
\theta ^{\frac{2\alpha }{\alpha +\gamma - d}} \right )
\end{equation*}
and
\begin{equation*}
d_{W} \left ( \sqrt{n} \left ( \widehat{\theta }_{n,3}-
\theta \right ) , N \left (0, s_{3,\theta , \alpha , \gamma } \left (
\frac{\alpha +\gamma -d}{\gamma -d}\right ) ^{2}
\theta ^{\frac{2\alpha }{\alpha +\gamma - d}} \right )\right ) \leq c\frac{1}{
\sqrt{n}}.
\end{equation*}
\end{prop}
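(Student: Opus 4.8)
The plan is to deduce the limiting law of $\widehat{\theta}_{n,3}$ from the asymptotics of its power $\widehat{\theta}_{n,3}^{(\gamma-d)/(\alpha+\gamma-d)}$ already obtained in Proposition \ref{pp22}, by applying the delta-method (\ref{22m-5}) and then following, step by step, the argument used for Proposition \ref{pp5}. Concretely, I would set
\[
g(x)= x^{\frac{\alpha+\gamma-d}{\gamma-d}},\qquad X_{n}=\widehat{\theta}_{n,3}^{\frac{\gamma-d}{\alpha+\gamma-d}},\qquad \gamma_{0}=\theta^{\frac{\gamma-d}{\alpha+\gamma-d}},
\]
so that $g(X_{n})=\widehat{\theta}_{n,3}$ and $g(\gamma_{0})=\theta$. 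Proposition \ref{pp22} provides exactly the input $\sqrt{n}(X_{n}-\gamma_{0})\to^{(d)} N(0,s_{3,\theta,\alpha,\gamma}^{2})$, together with the Wasserstein bound $d_{W}(\sqrt{n}(X_{n}-\gamma_{0}),N(0,s_{3,\theta,\alpha,\gamma}^{2}))\leq cn^{-1/2}$, needed to start the argument. Note that under condition (\ref{22m-1}) one has $\gamma-d<0$ and $\alpha+\gamma-d>0$, so both $g$ and $g'$ are smooth and non-vanishing in a neighbourhood of $\gamma_{0}>0$, which legitimises the delta-method.

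First I would compute $g'(x)=\frac{\alpha+\gamma-d}{\gamma-d}\,x^{\frac{\alpha}{\gamma-d}}$ and evaluate it at $\gamma_{0}$, obtaining $g'(\gamma_{0})=\frac{\alpha+\gamma-d}{\gamma-d}\,\theta^{\frac{\alpha}{\alpha+\gamma-d}}$ and hence $g'(\gamma_{0})^{2}=\big(\frac{\alpha+\gamma-d}{\gamma-d}\big)^{2}\theta^{\frac{2\alpha}{\alpha+\gamma-d}}$. Substituting into (\ref{22m-5}) then yields at once the announced central limit theorem with limiting variance $s_{3,\theta,\alpha,\gamma}^{2}\big(\frac{\alpha+\gamma-d}{\gamma-d}\big)^{2}\theta^{\frac{2\alpha}{\alpha+\gamma-d}}$, which is the quantity displayed in the statement.

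For the Berry--Ess\'{e}en bound I would reproduce the Taylor-expansion step of Proposition \ref{pp5}. Writing $\widetilde{\gamma_{0}}$ for an intermediate point between $X_{n}$ and $\gamma_{0}$,
\[
\sqrt{n}\big(g(X_{n})-g(\gamma_{0})\big)=g'(\gamma_{0})\sqrt{n}(X_{n}-\gamma_{0})+T_{n},\qquad T_{n}:=\sqrt{n}(X_{n}-\gamma_{0})\big(g'(\widetilde{\gamma_{0}})-g'(\gamma_{0})\big),
\]
I would estimate $\mathbf{E}|T_{n}|$ by Cauchy--Schwarz as the product of $\big(\mathbf{E}(\sqrt{n}(X_{n}-\gamma_{0}))^{2}\big)^{1/2}$, which stays bounded since the $L^{2}$-convergence is contained in Proposition \ref{pp22}, and $\big(\mathbf{E}(g'(\widetilde{\gamma_{0}})-g'(\gamma_{0}))^{2}\big)^{1/2}$. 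The claimed rate then follows from the triangle inequality for $d_{W}$, combining the Wasserstein bound of Proposition \ref{pp22} on $\sqrt{n}(X_{n}-\gamma_{0})$ with $\mathbf{E}|T_{n}|$.

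The main obstacle I anticipate is controlling the second Cauchy--Schwarz factor $\mathbf{E}(g'(\widetilde{\gamma_{0}})-g'(\gamma_{0}))^{2}$. Since $g'$ is a negative power, evaluating it at the random intermediate point gives $g'(X_{n})-g'(\gamma_{0})=\frac{\alpha+\gamma-d}{\gamma-d}\big(\widehat{\theta}_{n,3}^{\frac{\alpha}{\alpha+\gamma-d}}-\theta^{\frac{\alpha}{\alpha+\gamma-d}}\big)$, and one must relate this $L^{2}$-increment to $\mathbf{E}(X_{n}-\gamma_{0})^{2}=\mathbf{E}\big(\widehat{\theta}_{n,3}^{\frac{\gamma-d}{\alpha+\gamma-d}}-\theta^{\frac{\gamma-d}{\alpha+\gamma-d}}\big)^{2}\leq c/n$, which is again the $L^{2}$-statement of Proposition \ref{pp22}. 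The delicate point is that this comparison of two different powers of $\widehat{\theta}_{n,3}$ requires $g'$ to be locally Lipschitz near $\gamma_{0}$ and $\widetilde{\gamma_{0}}$ to stay bounded away from $0$ with high probability; here the sign conditions $0<\gamma<d<\gamma+\alpha$ play exactly the role that $\alpha>1$ played in Proposition \ref{pp5}, and they let the same estimate be pushed through, giving $\mathbf{E}|T_{n}|\leq c/\sqrt{n}$ and hence the rate $c/\sqrt{n}$.
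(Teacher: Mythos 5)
Your proposal is correct and coincides with the paper's own proof, which consists precisely of applying the delta-method (\ref{22m-5}) with $g(x)=x^{\frac{\alpha+\gamma-d}{\gamma-d}}$ and $\gamma_{0}=\theta^{\frac{\gamma-d}{\gamma+\alpha-d}}$ and then repeating the Taylor-expansion and Cauchy--Schwarz argument of Proposition \ref{pp5}. You simply spell out the remainder-term estimate and the computation of $g'(\gamma_{0})^{2}$ in more detail than the paper does.
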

\begin{proof}
It suffices to apply (\ref{22m-5}) with $g(x)= x ^{\frac{\alpha +
\gamma -d}{\gamma -d}}$ and $\gamma _{0}=
\theta ^{\frac{\gamma -d}{\gamma +\alpha -d}}$ and to follow the proof
of Proposition \ref{pp5}.
\end{proof}

\subsection{Estimators based on the spatial variation}
\label{sec3.4}
We will repeat the method employed in the previous parts of our work in
order to define an estimator expressed in terms of the variations in
space of the process (\ref{sol2}) for the parameter $\theta $ in
(\ref{jada}).

Recall that we 
proved in Proposition \ref{pp10} that for every fixed time
$t>0$,
\begin{equation*}
\left ( \theta ^{\frac{1}{2} } m_{\alpha , \gamma }^{-1} u_{\theta } (t,
\mathbf{x}),\mathbf{x} \in \mathbb{R} ^{d}\right )
\end{equation*}
is a perturbed multiparameter isotropic fractional Brownian motion\index{fractional ! Brownian motion} as
defined in Lemma \ref{l21}. Then we can deduce the variation in space
of $u_{\theta }$ recalling that $\mathbf{x}_{i}= (x_{i} ^{(1)},\ldots ,
x_{i} ^{(d)}) $ with $x_{i}^{(j)}=A_{1} + \frac{i}{n} (A_{2}-A_{1}) $
for $i=0,\ldots ,n$ and $j=1,\ldots ,d$.

\begin{prop}
\label{mam}%
Let $u_{\theta } $ be given by (\ref{sol2}). Then
\begin{equation*}
\sum _{i=0} ^{n-1} \left | u_{\theta } (t,\mathbf{x}_{j+1})-u(_{\theta
}t,\mathbf{x}_{j})\right | ^{\frac{2}{\alpha +\gamma -d} }
\to _{n\to \infty } m_{\alpha ,\gamma }^{\frac{2}{\alpha +\gamma -d}}(A
_{2}-A_{1})\sqrt{d}\mu _{\frac{2}{\alpha +\gamma -d}}\left |\theta \right |
^{\frac{-1}{\alpha +\gamma -d}}
\end{equation*}

\end{prop}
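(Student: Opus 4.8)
The plan is to identify the spatial $\frac{2}{\alpha+\gamma-d}$-variation of $u_\theta(t,\cdot)$ with the $\frac1H$-variation of a perturbed isotropic fractional Brownian motion and then to read off the limit from Lemma~\ref{l21}. First I would invoke Proposition~\ref{pp10}, which gives for fixed $t$ the equality in distribution
\[
\bigl(u_\theta(t,\mathbf{x})\bigr)_{\mathbf{x}\in\mathbb{R}^d}\equiv^{(d)}\bigl(\theta^{-1/2}m_{\alpha,\gamma}B^H(\mathbf{x})+S_{\theta t}(\mathbf{x})\bigr)_{\mathbf{x}\in\mathbb{R}^d},
\]
where $B^H$ is the $d$-parameter isotropic fBm with $H=\frac{\alpha+\gamma-d}{2}$ (positive by condition (\ref{22m-1})) and $S_{\theta t}$ has $C^\infty$ sample paths. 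Since the variation in the statement depends only on the law of the process, it suffices to compute it for this perturbed fBm. The key numerical observation is that the exponent equals the reciprocal Hurst index, $\frac{2}{\alpha+\gamma-d}=\frac1H$, so part~(1) of Lemma~\ref{l21} is the relevant tool.

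Next I would reduce to the normalization used in Lemma~\ref{l21}. Writing $a=\theta^{-1/2}m_{\alpha,\gamma}>0$ and $Y(\mathbf{x})=aB^H(\mathbf{x})+S_{\theta t}(\mathbf{x})=a\bigl(B^H(\mathbf{x})+a^{-1}S_{\theta t}(\mathbf{x})\bigr)$, I would check that the rescaled perturbation $a^{-1}S_{\theta t}$ still satisfies the regularity hypothesis (\ref{26ss-2}): having smooth sample paths, it obeys $\mathbf{E}|S_{\theta t}(\mathbf{x})-S_{\theta t}(\mathbf{y})|^2\le C\Vert\mathbf{x}-\mathbf{y}\Vert^2$. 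Lemma~\ref{l21}(1) then yields that $B^H+a^{-1}S_{\theta t}$ has $\frac1H$-variation equal to $(A_2-A_1)\sqrt d\,\mathbf{E}|Z|^{1/H}$. Using the homogeneity $S^{n,1/H}(aY)=a^{1/H}S^{n,1/H}(Y)$ of the variation functional, the $\frac1H$-variation of $Y$ equals $a^{1/H}(A_2-A_1)\sqrt d\,\mathbf{E}|Z|^{1/H}$, the convergence holding in probability.

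Finally I would simplify the constant. Since $a^{1/H}=\bigl(\theta^{-1/2}m_{\alpha,\gamma}\bigr)^{2/(\alpha+\gamma-d)}=m_{\alpha,\gamma}^{\frac{2}{\alpha+\gamma-d}}\,|\theta|^{-\frac{1}{\alpha+\gamma-d}}$ and $\mathbf{E}|Z|^{1/H}=\mu_{\frac{2}{\alpha+\gamma-d}}$, the limit becomes $m_{\alpha,\gamma}^{\frac{2}{\alpha+\gamma-d}}(A_2-A_1)\sqrt d\,\mu_{\frac{2}{\alpha+\gamma-d}}\,|\theta|^{-\frac{1}{\alpha+\gamma-d}}$, exactly the claimed value. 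The only genuinely non-routine point is factoring the multiplicative constant $a$ out of the variation, since Lemma~\ref{l21} is stated without such a constant; this is handled by the homogeneity identity above, the smooth perturbation being negligible in the limit because $H<1$ makes the fBm increments of order $n^{-H}$ dominate the increments of $S_{\theta t}$, which are of order $n^{-1}$.
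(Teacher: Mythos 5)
Your argument is correct and is exactly the route the paper takes: its entire proof reads ``We use Lemma \ref{l21}, point 1,'' leaving implicit the reduction via Proposition \ref{pp10} and the factoring out of the constant $a=\theta^{-1/2}m_{\alpha,\gamma}$ by homogeneity of the variation functional, both of which you supply. Your write-up is simply a fully detailed version of the same proof.
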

\begin{proof}
We use Lemma \ref{l21}, point 1.
\end{proof}

\vskip0.2cm

For every $n\geq 1$, define
%
\begin{align}
\label{hatt4}
\widehat{\theta }_{n,4}&=
\Biggr[(m_{ \alpha ,\gamma } ^{\frac{2}{
\alpha +\gamma -d}}\mu _{\frac{2}{\alpha +\gamma -d}} \sqrt{d} (A
_{2}-A_{1}))^{-1}
\nonumber\\
&\quad \times \sum _{i=0} ^{n-1} \left | u_{\theta } (t,\mathbf{x}
_{j+1})-u_{\theta } (t,\mathbf{x}_{j})\right | ^{\frac{2}{\alpha +
\gamma -d} } \Biggl]^{-(\alpha +\gamma -d)},
\end{align}
and so
%
\begin{equation}
\label{5n-111}
\widehat{\theta } _{n,4} ^{\frac{-1}{\alpha +\gamma -d}} = \frac{1}{ m
_{ \alpha ,\gamma } ^{\frac{2}{\alpha +\gamma -d}}
\mu _{\frac{2}{\alpha +\gamma -d}} \sqrt{d}(A_{2}-A_{1})} \sum _{i=0}
^{n-1} \left | u_{\theta }(t,\mathbf{x}_{j+1})-u_{\theta }(t,
\mathbf{x}_{j})\right | ^{\frac{2}{\alpha +\gamma -d} }.
\end{equation}

We can deduce the asymptotic properties of the estimator by using Lemma
\ref{l21} and Proposition \ref{pp8}.

\begin{prop}
The estimator (\ref{hatt4}) converges in probability as $n\to \infty
$ to the parameter $\theta $. Moreover, if $ \frac{2}{\alpha + \gamma
-d}$ is an even integer, then
\begin{align*}
&\sqrt{n} \left [ \widehat{\theta }_{n,4} ^{\frac{-1}{\alpha +\gamma
-d }} -\theta ^{\frac{-1}{\alpha +\gamma -d }} \right ] \to N(0, s^{2}
_{4, \theta , \alpha , \gamma } )
\\
&\quad  \mbox{ with } s^{2}_{4, \theta ,
\alpha , \gamma } =\sigma ^{2} _{\frac{\alpha +\gamma -1}{2}, \frac{2}{
\alpha +\gamma -d}}\mu _{\frac{2}{\alpha +\gamma -d} } ^{-2}
\theta ^{\frac{-2}{\alpha +\gamma -d } }.
\end{align*}
We also have, for $n$ large enough,
\begin{equation*}
d_{W} \left ( \sqrt{n} \left [ \widehat{\theta }_{n,4} ^{\frac{-1}{
\alpha +\gamma -d }} -\theta ^{\frac{-1}{\alpha +\gamma -d }} \right ],
N(0, s^{2}_{4, \theta , \alpha , \gamma } )\right ) \leq c\frac{1}{
\sqrt{n}}.
\end{equation*}
\end{prop}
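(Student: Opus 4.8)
The plan is to mirror, in the isotropic $d$-parameter setting, the argument already used for the spatial estimator $\widehat{\theta}_{n,2}$ of Section~\ref{sec2}, with Lemma~\ref{l1} and Proposition~\ref{pp1} replaced by their multiparameter analogues, Lemma~\ref{l21} and Proposition~\ref{pp8}. The consistency is the easy part: since $\frac{2}{\alpha+\gamma-d}$ is an even integer, the sum appearing in (\ref{5n-111}) is exactly the power variation treated in Proposition~\ref{mam}, so that proposition together with the definition (\ref{hatt4}) and the continuous mapping theorem yield $\widehat{\theta}_{n,4}^{\frac{-1}{\alpha+\gamma-d}} \to \theta^{\frac{-1}{\alpha+\gamma-d}}$ in probability, and hence $\widehat{\theta}_{n,4} \to \theta$.

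For the limit distribution I would put $H = \frac{\alpha+\gamma-d}{2}$ and $q = \frac{1}{H} = \frac{2}{\alpha+\gamma-d}$. By Proposition~\ref{pp10} the rescaled field $Y(\mathbf{x}) := \theta^{\frac{1}{2}} m_{\alpha,\gamma}^{-1} u_\theta(t,\mathbf{x})$ coincides in law with a $d$-parameter isotropic fBm of index $H$ plus a smooth perturbation satisfying (\ref{26ss-2}). The CLT (\ref{c1111}) of Lemma~\ref{l21}, specialized to $Hq=1$, then gives
\[
\frac{1}{\sqrt{n}} \sum_{i=0}^{n-1} \left[ \frac{n\, d^{-1/2}}{A_2-A_1} \bigl( Y(\mathbf{x}_{i+1}) - Y(\mathbf{x}_i) \bigr)^{q} - \mu_q \right] \to^{(d)} N\bigl(0, \sigma_{H,q}^2\bigr).
\]

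The heart of the proof is to identify this statistic with the estimator. Writing $Y(\mathbf{x}_{i+1}) - Y(\mathbf{x}_i) = \theta^{1/2} m_{\alpha,\gamma}^{-1}(u_\theta(t,\mathbf{x}_{i+1}) - u_\theta(t,\mathbf{x}_i))$ brings out a factor $\theta^{\frac{1}{\alpha+\gamma-d}} m_{\alpha,\gamma}^{-\frac{2}{\alpha+\gamma-d}}$; the power of $m_{\alpha,\gamma}$ cancels the one in the normalization of $\widehat{\theta}_{n,4}^{\frac{-1}{\alpha+\gamma-d}}$ in (\ref{5n-111}), and the isotropic normalization $d^{-1/2}$ from Lemma~\ref{l21} cancels the $\sqrt{d}$ built into (\ref{hatt4}). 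After these cancellations the statistic equals $\sqrt{n}\,\mu_q\,\theta^{\frac{1}{\alpha+\gamma-d}}\bigl[\widehat{\theta}_{n,4}^{\frac{-1}{\alpha+\gamma-d}} - \theta^{\frac{-1}{\alpha+\gamma-d}}\bigr]$; dividing by $\mu_q\theta^{\frac{1}{\alpha+\gamma-d}}$ produces the asserted CLT with variance $\sigma_{H,q}^2\,\mu_q^{-2}\,\theta^{\frac{-2}{\alpha+\gamma-d}}$. The Wasserstein bound is then immediate: Proposition~\ref{pp8} gives $d_W(\frac{1}{\sqrt{n}}V_{q,n}(Y), N(0,\sigma_{H,q}^2)) \leq c/\sqrt{n}$, and since $\sqrt{n}\bigl[\widehat{\theta}_{n,4}^{\frac{-1}{\alpha+\gamma-d}} - \theta^{\frac{-1}{\alpha+\gamma-d}}\bigr]$ is obtained from $\frac{1}{\sqrt{n}}V_{q,n}(Y)$ by the deterministic scaling $x \mapsto x/(\mu_q\theta^{\frac{1}{\alpha+\gamma-d}})$, the identity $d_W(aF,aG)=|a|\,d_W(F,G)$ transfers the bound, absorbing the constant into $c$.

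I expect the only delicate point to be the constant bookkeeping in the third paragraph, and specifically the identity $d^{-Hq/2}\sqrt{d}=1$ valid when $Hq=1$: this is exactly what reconciles the dimension-dependent normalization of Lemma~\ref{l21} with the $\sqrt{d}$ in the definition (\ref{hatt4}). Everything else is a faithful transcription of the one-parameter computation.
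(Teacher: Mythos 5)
Your proposal is correct and is essentially the argument the paper intends: the paper omits a written proof and simply points to Lemma~\ref{l21} and Proposition~\ref{pp8}, which is exactly the route you take (Proposition~\ref{pp10} to identify $\theta^{1/2}m_{\alpha,\gamma}^{-1}u_\theta(t,\cdot)$ as a perturbed isotropic fBm with $H=\frac{\alpha+\gamma-d}{2}$, then the CLT (\ref{c1111}) with $Hq=1$ and the scaling property of $d_W$), and your constant bookkeeping, including the cancellation $d^{-Hq/2}\sqrt{d}=1$, checks out. Your variance $\sigma^2_{\frac{\alpha+\gamma-d}{2},q}\mu_q^{-2}\theta^{\frac{-2}{\alpha+\gamma-d}}$ is the correct one; the subscript $\frac{\alpha+\gamma-1}{2}$ in the stated proposition appears to be a typo for $\frac{\alpha+\gamma-d}{2}$.
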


Finally, we get the following proposition.

\begin{prop}
With $\widehat{\theta } _{n,4}$ from (\ref{hatt4}), as $n\to \infty $,
\begin{equation*}
\sqrt{n} \left ( \widehat{\theta }_{n,4}- \theta \right ) \to ^{(d)}
N \left (0, s_{4,\theta , \alpha , \gamma } \left ( \frac{\alpha +
\gamma -d}{\gamma -d}\right ) ^{2}
\theta ^{\frac{2\alpha }{\alpha +\gamma - d}} \right )
\end{equation*}
and
\begin{equation*}
d_{W} \left ( \sqrt{n,4} \left ( \widehat{\theta }_{n}-
\theta \right ) , N \left (0, s_{4,\theta , \alpha , \gamma } \left (
\frac{\alpha +\gamma -d}{\gamma -d}\right ) ^{2}
\theta ^{\frac{2\alpha }{\alpha +\gamma - d}} \right )\right ) \leq c\frac{1}{
\sqrt{n}}.
\end{equation*}
\end{prop}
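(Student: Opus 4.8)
The plan is to obtain both assertions from the previous proposition (the asymptotic normality and Wasserstein rate for $\widehat{\theta}_{n,4}^{\frac{-1}{\alpha+\gamma-d}}$) by the delta-method, following line by line the proof of Proposition \ref{pp5}. First I would set $X_n := \widehat{\theta}_{n,4}^{\frac{-1}{\alpha+\gamma-d}}$ and $\gamma_0 := \theta^{\frac{-1}{\alpha+\gamma-d}}$, so that by the previous proposition $\sqrt{n}(X_n-\gamma_0)\to^{(d)} N(0, s^2_{4,\theta,\alpha,\gamma})$. The function that reconstructs the estimator is $g(x)=x^{-(\alpha+\gamma-d)}$, because then $g(X_n)=\widehat{\theta}_{n,4}$ and $g(\gamma_0)=\theta$. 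Since $\gamma_0>0$ and, under condition (\ref{22m-1}), $\alpha+\gamma-d>0$, the function $g$ is smooth with $g'(\gamma_0)\neq 0$, so (\ref{22m-5}) immediately yields $\sqrt{n}(\widehat{\theta}_{n,4}-\theta)\to^{(d)} N(0, s^2_{4,\theta,\alpha,\gamma}\,g'(\gamma_0)^2)$; the remaining bookkeeping is to simplify $s^2_{4,\theta,\alpha,\gamma}\,g'(\gamma_0)^2$ into the form displayed in the statement.

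For the Berry--Ess\'een bound I would reproduce the remainder analysis of Proposition \ref{pp5}. Writing the first-order expansion $g(X_n)-g(\gamma_0)=g'(\gamma_0)(X_n-\gamma_0)+(X_n-\gamma_0)(g'(\widetilde{\gamma_0})-g'(\gamma_0))$ with $\widetilde{\gamma_0}$ between $X_n$ and $\gamma_0$, I split $\sqrt{n}(g(X_n)-g(\gamma_0))$ into the leading term $g'(\gamma_0)\sqrt{n}(X_n-\gamma_0)$ and a remainder $T_n$. By Cauchy--Schwarz, $\mathbf{E}|T_n|\leq (\mathbf{E}(\sqrt{n}(X_n-\gamma_0))^2)^{1/2}(\mathbf{E}(g'(\widetilde{\gamma_0})-g'(\gamma_0))^2)^{1/2}$; the first factor is bounded because the previous proposition also gives the $L^2(\Omega)$-convergence of $X_n$ to $\gamma_0$, while the second factor is $O(n^{-1/2})$ once $g'(\widetilde{\gamma_0})-g'(\gamma_0)$ is expressed through a power of $X_n$ minus the same power of $\gamma_0$ and estimated by $(\mathbf{E}(X_n-\gamma_0)^2)^{1/2}$. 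The triangle inequality for $d_W$, together with the $n^{-1/2}$ Wasserstein rate for $\sqrt{n}(X_n-\gamma_0)$ furnished by the previous proposition (which itself rests on Proposition \ref{pp8}), then gives $d_W(\sqrt{n}(\widehat{\theta}_{n,4}-\theta), N(0, s^2_{4,\theta,\alpha,\gamma}\,g'(\gamma_0)^2))\leq c\,n^{-1/2}$.

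The one genuinely delicate point is the bound $\mathbf{E}|T_n|\leq c\,n^{-1/2}$, exactly as in Proposition \ref{pp5}. Since $g'(x)=-(\alpha+\gamma-d)x^{-(\alpha+\gamma-d)-1}$ is singular only at $x=0$ while $\gamma_0>0$, one must ensure that the random intermediate point $\widetilde{\gamma_0}$ cannot push $g'(\widetilde{\gamma_0})$ out of control; this reduces to an $L^2$-control of the (negative) moments of $X_n$ near $\gamma_0$, which is supplied by the same Gaussian moment estimates underlying the previous proposition. Everything else is routine manipulation of the exponents attached to $\alpha$, $\gamma$ and $d$.
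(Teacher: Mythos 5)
Your proposal is correct and follows essentially the same route as the paper: the delta-method (\ref{22m-5}) combined with the remainder and Wasserstein-distance analysis of Proposition \ref{pp5}, applied to the convergence of $\widehat{\theta }_{n,4}^{\frac{-1}{\alpha +\gamma -d}}$ from the preceding proposition. The only discrepancy is the choice of the transformation: the paper's one-line proof states $g(x)= x ^{\frac{\alpha +\gamma -d}{\gamma -d}}$ with $\gamma _{0}= \theta ^{\frac{\gamma -d}{\gamma +\alpha -d}}$ (apparently carried over from the $\widehat{\theta }_{n,3}$ case), whereas your $g(x)=x^{-(\alpha +\gamma -d)}$ with $\gamma _{0}=\theta ^{\frac{-1}{\alpha +\gamma -d}}$ is the choice actually consistent with the exponent appearing in (\ref{5n-111}), so your version is, if anything, the more carefully matched one.
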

\begin{proof}
Apply again (\ref{22m-5}) with $g(x)= x ^{\frac{\alpha +\gamma -d}{
\gamma -d}}$ and $\gamma _{0}= \theta ^{\frac{\gamma -d}{\gamma +\alpha
-d}}$.
\end{proof}

\begin{remark}
Notice that in the case $\gamma =1$ (i.e., there is no spatial
correlation and in this case $d$ has to be 1), we retrieve the results of
Section \ref{sec2}. Observe, as in Section \ref{sec2}, that the distance
of the estimators (\ref{hatt3}) and (\ref{hatt4}) to their limit
distribution is of the same order, although they involve $q$-variations
of different orders.
\end{remark}

\section{Conclusion}\vspace*{-2pt}
\label{sec4}
To conclude, in 
this paper we provide estimators based on power variation
for the drift parameter $\theta $ of the solution to the fractional
stochastic heat equation\index{fractional ! stochastic heat equation}\index{stochastic heat equation fractional} (\ref{1}). The novelty of our approach is that
it allows, comparing with the literature on statistical inference for
SPDEs (see \cite{Cia,Ma,BT1}, etc.), to
consider the case of a Gaussian noise\index{Gaussian noise} with non-trivial spatial
correlation and to treat the situation when the differential operator
in the heat equation\index{heat equation} (\ref{1}) is the fractional Laplacian\index{fractional ! Laplacian} instead of
the standard Laplacian.\index{standard Laplacian} The proofs of the asymptotic behavior of the
estimators are relatively simple and they are based on the link between
the law of the solution and the fractional Brownian motion,\index{fractional ! Brownian motion} using known
results on the behavior of the power variations\index{power variations} of the fBm. Our approach
also gives the rate of convergence of the estimators under the
Wasserstein distance\index{Wasserstein distance} via some recent results in Stein--Malliavin calculus
(see \cite{NP-book}). We assumed for simplicity a vanishing
initial condition in (\ref{1}) but the case of a notrivial initial value,
whose power variations\index{power variations} are dominated by those of the fBm, can be also
treated by our approach. Another open problem of interest that could by
treated via our techniques is adding an unknown volatility parameter in
the disturbance term and jointly estimating the drift and the volatility
parameters. The case of the fractional heat equation\index{heat equation} on bounded domains
is also interesting but in this case the fundamental solution and
implicitly the law of the mild solution\index{mild solution} changes. Consequently, the
relation between the law of the solution and the fBm is not obvious and
therefore new techniques are needed.



\begin{funding}
C. Tudor is partially supported by \gsponsor[id=GS2]{Labex Cempi}
(\gnumber[refid=GS2]{ANR-11-LABX-0007-01}) and\break \gsponsor[id=GS3]{MATHAMSUD} Project SARC (\gnumber[refid=GS3]{19-MATH-06}).
\end{funding}

\end{document}